\documentclass[11pt]{amsart}


\usepackage{amsfonts,amsmath,latexsym,amssymb,verbatim,amsbsy,amsthm}
\usepackage{esint}

\usepackage{graphicx}
\usepackage{caption}
\usepackage{float}
\usepackage{wrapfig}
\usepackage{tikz}
\usepackage{amssymb}
\usepackage{amsmath}
\usepackage{enumitem}
\usepackage{esint}
\usepackage{dsfont}
\usepackage[colorlinks=true, pdfstartview=FitV, linkcolor=blue,citecolor=blue, urlcolor=blue]{hyperref}


\usepackage[top=1in, bottom=1in, left=1in, right=1in]{geometry}

\usepackage{wrapfig}




\theoremstyle{plain}
\newtheorem{THEOREM}{Theorem}[section]

\newtheorem{theorem}[THEOREM]{Theorem}

\newtheorem{lemma}[THEOREM]{Lemma}
\newtheorem{proposition}[THEOREM]{Proposition}

\theoremstyle{definition}

\theoremstyle{remark}


\newcommand{\thm}[1]{Theorem~\ref{#1}}
\newcommand{\lem}[1]{Lemma~\ref{#1}}

\newcommand{\prop}[1]{Proposition~\ref{#1}}

\newcommand{\sect}[1]{Section~\ref{#1}}


 %
 %
 %
 %
 %
 %
 %
 %
 %
 %
 %
\DeclareMathOperator{\Tr}{Tr} %
 %
 %
 %


\def \a {\alpha}
\def \b {\beta}
\def \g {\gamma}
\def \d {\delta}
\def \k {\kappa}
\def \e {\varepsilon}

\def \l {\lambda}
\def \n {\nabla}

\def \t {\tau}
\def \th {\theta}

\def \L {\Lambda}

\def \O {\Omega}


\def \be {{\bf e}}

\def \bj {{\bf j}}


\def \cA {\mathcal{A}}

\def \cC {\mathcal{C}}

\def \cE {\mathcal{E}}

\def \cL {\mathcal{L}}

\def \cN {\mathcal{N}}

\def \cR {\mathcal{R}}

\def \cT {\mathcal{T}}

\usepackage{amsmath}


\def \dH {\dot{H}}\def \dW {\dot{W}}
\def \rmin{\underline{\rho}}
\def \rmax{\overline{\rho}}


\newcommand{\N}{\ensuremath{\mathbb{N}}}   
\newcommand{\Z}{\ensuremath{\mathbb{Z}}}   
\newcommand{\R}{\ensuremath{\mathbb{R}}}   
\newcommand{\T}{\ensuremath{\mathbb{T}}}   



\def \loc {\mathrm{loc}}

\def \one {{\mathds{1}}}


\def \lan {\langle}
\def \ran {\rangle}

\def \p {\partial}


\def \dx  {\, \mbox{d}x}
\def \dxi  {\, \mbox{d}\xi}

\def \dy  {\, \mbox{d}y}
\def \dz  {\, \mbox{d}z}

\def \ds  {\, \mbox{d}s}
\def \dw  {\, \mbox{d}w}
\def \dth  {\, \mbox{d}\th}

\def \dl  {\, \mbox{d}\l}

\def \ddt  {\frac{\mbox{d\,\,}}{\mbox{d}t}}

\begin{document}

\title[Topological diffusion models]{Global solutions to multi-dimensional topological \\ Euler alignment systems}	

\author{Daniel Lear}

\author{David N. Reynolds}

\author{Roman Shvydkoy}

\address{Department of Mathematics, Statistics and Computer Science, University of Illinois at Chicago, 60607}

\email{lear@uic.edu}
\email{dreyno8@uic.edu}
\email{shvydkoy@uic.edu}

\subjclass{92D25, 35Q35}

\date{\today}

\keywords{Cucker-Smale, Euler-Alignment , topological diffusion, fractional Laplacian, paraproducts}

\thanks{\textbf{Acknowledgment.}  
	The work of RS was  supported in part by NSF
	grants DMS-1813351 and DMS-2107956.}

\begin{abstract}
We present a systematic approach to regularity theory of the multi-dimensional Euler alignment systems with topological diffusion introduced in \cite{STtopo}. While these systems exhibit  flocking behavior emerging from purely local communication, bearing direct relevance to empirical field studies, global and even local well-posedness has proved to be a major challenge in multi-dimensional settings due to the presence of topological effects. In this paper we reveal two important classes of global smooth solutions -- parallel shear flocks with incompressible velocity and stationary density profile, and nearly aligned flocks with close to constant velocity field but arbitrary density distribution. Existence of such classes is established via an efficient continuation criterion requiring control only on the Lipschitz norm of state quantities, which makes it accessible to the applications of fractional parabolic theory. The criterion presents a major improvement over the existing result of \cite{RS2020}, and is proved with the use of  quartic paraproduct estimates.
\end{abstract}

\maketitle

\section{Introduction}

One of the major problems of the mathematical theory of systems with self-organization is to understand how global phenomena emerge from local interactions between agents.  In the context of alignment dynamics such questions were addressed already in the seminal works of Cucker and Smale \cite{CS2007a,CS2007b} and studied extensively in \cite{HKPZ-2019,HL2009,MPT2019,MT2014,T2021}.  Two underlying mechanisms lie behind rigorous results in this direction -- either sufficiently strong communication at long range to ensure that no agent escapes the influence of the crowd, or  perpetual connectivity of the flock at local communication range which ensures exchange of information between agents at all times.

We study alignment dynamics of large systems that are described (in the bulk) by the periodic solutions to the hydrodynamic Euler alignment system given by 
\begin{equation}\label{e:main}
\left\{
\begin{split}
\rho_t + \n \cdot (\rho u) & = 0, \\
u_t + u \cdot \n u &= \int_{\T^n}\phi(x,y)(u(y,t) - u(x,t))  \rho(y,t)\dy.
\end{split} \right. 
\end{equation}
Here, $\phi$ stands for the communication protocol, $\rho$ the density of the crowd and $u$ is the velocity field. When $\phi$ is purely local, i.e.
\[
\l \one_{|x-y| <r_0/2} \leq \phi(x,y) \leq  \frac{\L \one_{|x-y| <r_0}}{|x-y|^{\b}}, \quad \b \geq 0,
\]
the mechanism of long range communication is not available. So,
establishing alignment of solutions, which means vanishing of the velocity amplitude,
\[
\cA(t) = \max_{x,y\in \T^n} |u(x,t) - u(y,t)| \to 0, \quad t \to \infty,
\]
would normally rely on sufficiently strong hydrodynamic connectivity expressed by the lower bound on the density  $\rho(\cdot,t) \gtrsim \frac{1}{\sqrt{1+t}}$,
  see \cite{STtopo} and also  \cite{T2021}.  Such a power lower bound is not available for general non-vacuous solutions, except for the case of global singular metric kernels in 1D, or under certain threshold condition in smooth kernel case, see \cite{DKRT2018,ST1,ST2,ST3}.

In \cite{STtopo} a new class of local models was introduced. The underlying principle behind them mirrors observations from the empirical studies of flocks \cite{Bal2008,CCGPS2012,Cavagna2010} which postulate that the probe horizon of given agent $x$ is determined, less by the Euclidean distance, and more by the density of the crowd around $x$. Thus, in thicker crowds communication spreads slower than in thinner ones. Communication based on the density is called {\em topological}, as opposed to {\em metric} which is based on the Euclidean distance. To measure topological distance we implement a metric determined by the mass of a communication domain $\O(x,y)$ between $x$ and $y$:
\begin{equation*}\label{}
d(x,y) = \left(\int_{\O(x,y)} \rho(\xi,t) \dxi \right)^{1/n}.
\end{equation*}
The domain is assumed to be symmetric $\O(x,y) = \O(y,x)$, and obtained by translation and dilatation of a basic domain $\O_0 = \O(0,\be_1)$, which is smooth everywhere but $x,y$ and fits within the intersection of cones of opening $<\pi$ at $x$ and $y$.
Then $\phi(x,y)$ is defined to be a symmetric singular kernel of degree $0<\a<2$ that protocols both topological and metric communications gauged by a parameter $\t>0$,
\begin{equation}\label{e:kernel}
\phi(x,y)=\frac{h(x-y)}{|x-y|^{n+\alpha-\tau}d^{\tau}(x,y)}, \quad 0<\a<2.
\end{equation}
Here, $h = h(r)$ is a smooth local radial bump function satisfying
\[
\l \one_{r<r_0/2} \leq h(r) \leq \L \one_{r<r_0}.
\]
Note that for the metric case $\t = 0$ the action of the kernel is that of the classical (short-range) fractional Laplacian.

The main result of \cite{STtopo} states that with implemented topological protocol  the level of hydrodynamic connectivity can be lowered to $\rho(\cdot,t) \gtrsim \frac{1}{1+t}$ to achieve alignment as long as $\t \geq n$:
\[
\cA(t) \lesssim \frac{1}{\sqrt{\ln (1+t)}}.
\]
Moreover, it is proved that the connectivity condition holds automatically for any non-vacuous data in 1D.

Such a clear advantage of the topological communication  prompts development of regularity theory for the corresponding system \eqref{e:main}-\eqref{e:kernel}. In 1D global well-posedness of the system was established in the same work \cite{STtopo} in classes  
\begin{equation*}\label{e:class}
u \in H^{m+1}, \quad \rho \in H^{m+\a}, \quad m\geq 3,
\end{equation*}
with an adaptation of the De Giorgi method to non-symmetric settings (although $\phi$ itself is symmetric, the total density weighted kernel $\rho(y) \phi(x,y)$ is not).
The major difference between 1D and multi-D cases is manifested in an additional conservation law
\begin{equation}\label{e:eintro}
\begin{split}
e =  u_x + \cL_\phi \rho, \quad e_t +  (u\rho)_x = 0, \\
\cL_\phi f (x) = p.v. \int_{\T^n} \phi(x,y) (f(y) - f(x)) \dy,
\end{split}
\end{equation}
which is present in 1D but not in higher dimensions. This law facilitates a priori control over the higher order norms via the bound $\n^k e \lesssim \n^k \rho$. Yet, for the metric ($\t = 0$) multi-dimensional systems several classes of global smooth solutions have been discovered. Those  include unidirectional flocks \cite{LSuni}, nearly aligned flocks \cite{Snearly}, and flocks with a small spectral gap data in 2D \cite{HeT2017}.  Moreover, an effective continuation criterion proved in \cite{LSuni} requires only control on the gradients $\| \n \rho\|_{L^\infty([0,T)\times \T^n)}$ and $ \| \n u\|_{L^\infty([0,T)\times \T^n)}$. We refer to the text \cite{Sbook} for a systematic exposition.

A similar theory for topological systems has been elusive due to additional topological terms that enter into the conservation law \eqref{e:eintro}, see \eqref{e:etopo} below. Local existence and uniqueness of solutions in class \eqref{e:class} for a large $m\in \N$ was established in \cite{RS2020} by a direct and technical application of fractional energy estimates. This result comes with a rough continuation criterion in terms of $u,\rho \in C^2$. With more effort the technique gives $u,\rho \in C^\g$, where $\g = 1+\e$ for $\a\leq 1$, and $\g = \a+\e$ for $\a>1$. In either case such a criterion is less effective from the application standpoint as the typical fractional parabolic regularity theory, for example, provides Schauder estimates in $C^{1+\d}$ for an indeterminate small  parameter $\d>0$, see \cite{DJZ2018,IJS2018,SS2016} and references therein.

In this work we aim to provide a systematic treatment of the regularity theory for topological systems. Our first result gives the continuation criterion, a direct analogue of the metric case \cite{LSuni}.

\begin{theorem}\label{t:criterion}
Let $(u,\rho) \in L^\infty_\loc([0,T);H^{m+1} \times H^{m+\a})$ be a local non-vacuous solution to \eqref{e:main}-\eqref{e:kernel} such that 
\begin{equation}\label{e:criterion}
 \| \n \rho\|_{L^\infty([0,T)\times \T^n)}+ \| \n u\|_{L^\infty([0,T)\times \T^n)}<\infty.
\end{equation}
Then the solution can be extended beyond the interval $[0,T]$.
\end{theorem}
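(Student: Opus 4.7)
The plan mirrors the metric-case strategy of \cite{LSuni}: first leverage \eqref{e:criterion} to obtain pointwise density bounds, then bootstrap the velocity and density to a H\"older space of order $>1$ via fractional parabolic regularity, and finally close a Gr\"onwall argument on the top-order norm in $H^{m+1}\times H^{m+\alpha}$. From the continuity equation written as $\ddt\rho = -\rho\,\n\cdot u$ along characteristics of $u$, the Lipschitz bound on $u$ yields
\[
0 < \rmin(T) \leq \rho(x,t) \leq \rmax(T) < \infty, \qquad (x,t) \in \T^n\times[0,T).
\]
In particular, the topological distance $d(x,y)=(\int_{\O(x,y)}\rho)^{1/n}$ is uniformly comparable to $|x-y|$ on the communication scale, so the operator $\cL_{\phi\rho}[u](x)=\int \phi(x,y)\rho(y)(u(y)-u(x))\dy$ driving the momentum equation acts as a uniformly elliptic non-local dissipation of order $\alpha$.

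\textbf{Bootstrap from Lipschitz to $C^{1+\delta}$.} Viewing the velocity equation as the nonlocal drift-diffusion problem
\[
u_t + u\cdot\n u = \cL_{\phi\rho}[u]
\]
with Lipschitz drift and kernel symbol bounded above and below, the fractional De Giorgi/Schauder theory of \cite{IJS2018,SS2016,DJZ2018} upgrades $u$ from Lipschitz to $C^{1+\delta}$ for some $\delta=\delta(\alpha,\rmin,\rmax,\|\n u\|_\infty)>0$. A parallel argument applied to the topological analogue of the conservation law \eqref{e:eintro} then bootstraps $\n\rho \in C^\delta$. These H\"older gains will be used only qualitatively, to ensure that every term appearing below is classically defined; no smallness of $\delta$ will be required.

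\textbf{Top-order estimates and the main obstacle.} With $u,\rho\in C^{1+\delta}$ in hand, I would perform an $L^2$ energy estimate on $\n^{m+1} u$ and $\n^{m+\a}\rho$. The density equation contributes a standard transport-plus-commutator inequality bounded by $\|\n u\|_\infty\bigl(\|u\|_{H^{m+1}}^2+\|\rho\|_{H^{m+\alpha}}^2\bigr)$ via Kato--Ponce. Commuting $\n^{m+1}$ through $\cL_{\phi\rho}$ in the momentum equation yields a coercive term $\gtrsim \rmin\|u\|_{\dH^{m+1+\alpha/2}}^2$ together with commutator contributions of the schematic form
\[
\int \n^{m+1}u(x)\cdot\bigl[\n^{m+1},\,\phi(x,y;\rho)\,\rho(y)\bigr](u(y)-u(x))\dy\dx.
\]
The central difficulty is that $\phi$ itself depends on $\rho$ through $d(x,y)$, so this commutator is genuinely multilinear -- up to four derivative-carrying factors after a full Leibniz expansion -- and classical bilinear Kato--Ponce or paraproduct bounds are insufficient.

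\textbf{Quartic paraproducts.} The remedy is the \emph{quartic paraproduct estimates} advertised in the abstract: each of the four factors is Littlewood--Paley-decomposed, the $\binom{4}{\cdot}$ frequency interactions are sorted into high-low, low-high and resonant blocks, the top-frequency contribution is absorbed into the dissipation $\|u\|_{\dH^{m+1+\alpha/2}}^2$ using the singularity of $\phi$, and the remaining blocks are controlled by $\|u\|_{H^{m+1}}^2+\|\rho\|_{H^{m+\alpha}}^2$ with a prefactor depending only on $\|\n u\|_\infty$, $\|\n\rho\|_\infty$, $\rmin^{-1}$ and $\rmax$. Gr\"onwall combined with \eqref{e:criterion} then propagates the top-order norms up to $t=T$, and the local well-posedness theory of \cite{RS2020} extends the solution past $T$.
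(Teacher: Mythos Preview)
Your outline has a genuine gap in the top-order density estimate, and it also misplaces where the quartic paraproducts are actually needed.

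\textbf{The derivative overload.} You claim that applying Kato--Ponce to the continuity equation yields a bound of the form $\|\n u\|_\infty\bigl(\|u\|_{H^{m+1}}^2+\|\rho\|_{H^{m+\alpha}}^2\bigr)$. This is false. Writing $\rho_t=-u\cdot\n\rho-\rho\,\n\cdot u$ and applying $\L^{m+\alpha}$, the stretching term $\rho\,\n\cdot u$ produces, via the product rule, a contribution $\|\rho\|_\infty\|\n\cdot u\|_{H^{m+\alpha}}\sim\|u\|_{H^{m+1+\alpha}}$. Since $m+1+\alpha>m+1+\tfrac{\alpha}{2}$ for every $\alpha>0$, this exceeds even the order of the parabolic dissipation coming from the velocity equation, and no amount of Young's inequality or absorption can close the estimate. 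The paper says exactly this at the start of Section~3: ``placing $m+\alpha$ derivatives directly on the continuity equation creates a derivative overload on the velocity, which comes with the order of $m+\alpha+1$, higher than even the order of dissipation.'' The remedy adopted in the paper is to replace $\rho$ at top order by the quantity $e=\n\cdot u+\cL_\phi\rho$, show via the coercivity Proposition~2.3 that $\|e\|_{\dH^m}\sim\|\rho\|_{\dH^{m+\alpha}}$ modulo lower-order terms, and run the energy estimate on $Y_m=\|u\|_{\dH^{m+1}}^2+\|e\|_{\dH^m}^2$. In the $e$-equation the dangerous $m+\alpha+1$-derivative terms cancel, but a new ``topological'' source $\cT[\rho,u]$ appears; \emph{this} is where the quartic paraproducts (Lemma~3.4) enter, not in the velocity commutator you describe. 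The velocity-equation commutators $[\p^{m+1},\phi(\cdot;\rho)\rho]$ are handled in the paper by more elementary multilinear estimates (Lemmas~2.1 and~2.2), not by paraproducts.

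\textbf{The $C^{1+\delta}$ bootstrap is unnecessary and likely unavailable here.} The paper never invokes Schauder or De~Giorgi theory in the proof of the criterion itself; the Gr\"onwall closes purely under \eqref{e:criterion}. The Schauder results of \cite{IJS2018,SS2016} that you cite are applied in the paper only \emph{after} the criterion is established, and only to the very special parallel shear flock equation \eqref{e:parabolic}, where the density is stationary and the drift is absent. Whether those results apply to the full system, with a time-varying $\rho$-dependent kernel and genuine transport $u\cdot\n u$, is not clear, and in any case your argument as written still breaks at the density step above regardless of how smooth $u$ and $\rho$ are pointwise.
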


Our basic methodology is similar to \cite{RS2020} but much different in two technical aspects. First, we establish new sets of estimates for multilinear singular integral operators, see Lemmas \ref{l:subcrit}, \ref{l:subcrit2}, that allow to extract the gradients of $u$ and $\rho$ in all subcritical terms appearing in the energy estimates. We also establish sharp coercivity bounds for the topological diffusion operator
\[
\|\cL_\phi \rho \|_{H^m} \sim \|\rho \|_{H^{m+\a}},
\]
up to a polynomial factor depending only on the gradients as well. Second aspect has to do with the critical terms that emerge from the topological component of the $e$-equation in the form of quartic products. To achieve control on such terms under the criterion assumption \eqref{e:criterion} we employ paraproduct estimates that are inspired by the proof of the positive side of the Onsager conjecture for incompressible Euler equation \cite{CCFS2008}, see \cite{BV2021} for the full overview of this subject. 

\thm{t:criterion} applies to reveal two new classes of global smooth solutions. First is the class of {\em parallel shear flocks} -- similar to unidirectional class discussed in \cite{LSuni} but with incompressible velocity field
\[
u = (U(x_2,\dots,x_n,t),0,\dots,0),
\]
and stationary smooth density $\rho = \rho_0(x_2,\dots,x_n)$. In this case the velocity component $u$ satisfies  a fractional parabolic equation  which will be shown to fulfill the assumptions of known regularity results in \cite{IJS2018,SS2016} that give uniform Schauder bounds on $u$ in class $C^{1+\g}$. Hence, the criterion applies, see \sect{s:psf}.  

Second is the class of {\em nearly aligned flocks} -- solutions with initial velocity amplitude $\cA_0$ inversely proportional to the size of the Sobolev norms of the data and lower bound on the density, see \thm{t:naf} for precise formulation. These solutions may still have large density profiles, so the class can be viewed as a partially small data. The metric case was treated in \cite{Snearly}, see also \cite{DMPW2019} for a similar result in Besov settings. We prove that nearly aligned flocks settle exponentially fast to a traveling wave solution with constant velocity
\[
u \to \bar{u}, \quad \rho \to \rho_\infty(x-t \bar{u}).
\]

Although the results of this present work brings the state of the regularity theory for topological models \eqref{e:main} essentially to the same level of development as for metric ones,  it has to be noted that general global well-posendess in both cases remains an outstanding open problem.

\section{Preliminaries}

In this section we collect all basic properties of the system as well as recall analytical tools, notations, and conventions that will be used in later sections.

\subsection{Notation}
First, we denote all $L^p$-norms by $\|\cdot \|_p$ for short.  The notation $A \lesssim B$ means $A\leq CB$, where $C$ depends only upon absolute constants or a priori bounded quantities such as $\| \n \rho\|_{L^\infty([0,T)\times \T^n)}$ and $ \| \n u\|_{L^\infty([0,T)\times \T^n)}$. $A\sim B$ means $A\lesssim B$ and $B \lesssim A$.

We denote finite differences by
\[
\delta_z f(x)=f(x+z)-f(x), \qquad \d^2_z f(x) = f(x+z) + f(x-z) - 2 f(x).
\]

\subsection{Sobolev spaces}
For $1\leq p < \infty$ and $0<s<1$ we adopt the use of the Gagliardo-Sobolevskii fractional Sobolev spaces
\begin{equation*}\label{}
\| f\|_{W^{s,p}}^p = \|f\|_p^p + \int_{\T^{2n}} \frac{|\d_z f(x)|^p}{|z|^{n+ sp}} h(z) \dz \dx.
\end{equation*}
For the upper range $1<s<2$ one has to use the next Taylor term:
\begin{equation*}\label{e:Sob12}
\| f\|_{W^{s,p}}^p = \|f\|_p^p + \int_{\T^{2n}} \frac{|\d_z f(x) - z \n f(x) |^p}{|z|^{n+ sp}} h(z) \dz \dx.
\end{equation*}
And for the extended range $0<s<2$ including the integer value $s=1$ one can define the Sobolev space using second finite difference: 
\begin{equation*}\label{}
\| f\|_{W^{s,p}}^p = \|f\|_p^p + \int_{\T^{2n}} \frac{|\d_z^2 f(x) |^p}{|z|^{n+ sp}} h(z) \dz \dx.
\end{equation*}
The $L^2$-based spaces will be denoted by $H^s = W^{2,s}$.

In the course of the proof we encounter finite differences with respect to a parameter $\xi$ depending on $z$ and the communication domain at question. Let us recall from \cite{RS2020} the change of variables
\[
\int_{\O(0,z)} f(\xi) \dxi = |z|^{n-1} \int_{\p \O_0} f( |z| U_z \th) \dth,
\]
where $\O_0 = \O(0,\bf{e}_1)$ is the basic communication domain connecting the origin with the first basis vector $\bf{e}_1$, and $U_z: \R^n \to \R^n$ is a unitary transformation sending $\bf{e}_1$ to $z / |z|$, and hence $ \O_0 \to \O(0,z)$. We often keep the same notation $\xi =  |z| U_z \th$ for the variable of integration keeping in mind that $\xi  = \xi(z,\th)$.  We have the following inequality for any function $\xi$ satisfying $|\xi|\leq |z|$:
\begin{equation} \label{e:Sobxi1}
 \int_{\mathbb{T}^{2n}}  \frac{ \left| f(x+ \xi) -  f (x) \right|^2 }{|z|^{n+2s} } h(z)   \dz \dx \lesssim \| f\|_{H^s}^2,  \quad 0<s<1.
\end{equation}
Indeed, by the Parseval identity,
\begin{equation*}\label{}
\begin{split}
\int_{\mathbb{T}^{2n}}  \frac{ \left| f(x+ \xi) -  f (x) \right|^2 }{|z|^{n+2s} } h(z)   \dz \dx = \sum_{k\in \Z^n} |\hat{f}(k)|^2 \int_{\T^{n}} |e^{i \xi\cdot k} - 1|^2 \frac{h(z)}{|z|^{n+2s} }   \dz .
\end{split}
\end{equation*}
Given that $|e^{i \xi\cdot k} - 1|^2 \lesssim \min\{ 1, |z|^2|k|^2\}$,  the integral in $z$ is bounded by $|k|^{2s}$ and the result follows.

\subsection{ Basic properties of the system. Local well-posedness}

A detailed discussion of the properties of the system \eqref{e:main}--\eqref{e:kernel} is presented in \cite{STtopo}. We recall a few that are needed for our future analysis. First, any smooth solution obeys the maximum principle
\begin{equation*}\label{}
\cA(t) \leq \cA_0.
\end{equation*}
The system is invariant under Galilean transformation 
\[
x \to x- t \bar{u}, \quad u \to u -  \bar{u}.
\]
Due to continuity and symmetry of the kernel, solutions preserve mass and momentum
\[
M = \int_{\T^n} \rho \dx, \quad P = \int_{\T^n} \rho u \dx.
\]
In view of the above the mean velocity $\bar{u} = P/M$ is preserved, and we can assume that $\bar{u} = 0$ by modding it out.

We have the following energy law for smooth solutions
\begin{equation}\label{e:energy}
\begin{split}
\cE &= \frac12 \int_{\T^n} \rho |u|^2 \dx , \\
\ddt \cE &= - \int_{\T^{2n}} \rho(x)\rho(y) | u(x) - u(y)|^2 \phi(x,y) \dy \dx.
\end{split}
\end{equation}

Finally, we recall the local well-posedness result proved in \cite{RS2020}.

\begin{theorem}\label{t:main} Let $0<\a < 2$ and $\tau \geq 0$. For any initial data $u_0\in H^{m+1}(\T^n)$, $\rho_0 \in H^{m+\a}(\T^n)$,  $m\geq m(\a,n)$, with no vacuum $\rho_0(x) >0$ there exists a unique non-vacuous solution to the system \eqref{e:main}-\eqref{e:kernel} on  a time interval $[0,T)$ where $T$ depends on the initial conditions,  in the class
	\begin{equation}\label{e:class}
		\begin{split}
		u & \in C_w([0,T), H^{m+1}) \cap L^2([0,T), H^{m+1+\frac{\a}{2}}), \\
		\rho& \in C_w([0,T), H^{m+\a}).
		\end{split}
	\end{equation}
\end{theorem}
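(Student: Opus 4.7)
The plan is to build solutions by mollifying the singular kernel, prove uniform-in-regularization energy estimates in the class \eqref{e:class}, pass to the limit by compactness, and obtain uniqueness through a Gr\"onwall argument on the difference of two solutions. Concretely, I would replace the bump $h$ in \eqref{e:kernel} by its $\e$-mollification, making the resulting kernel $\phi^\e$ integrable and the right-hand side of \eqref{e:main} locally Lipschitz on $H^{m+1}\times H^{m+\a}$; the Banach-space Picard theorem then yields a unique local solution $(u^\e,\rho^\e)$. The central quantity is the composite energy
\[
\cE_m = \|u\|^2_{H^{m+1}}+\|\rho\|^2_{H^{m+\a}},
\]
and the target is the $\e$-uniform inequality
\[
\ddt \cE_m + \rmin \|u\|^2_{H^{m+1+\a/2}} \le Q\bigl(\cE_m,\rmin^{-1}\bigr)
\]
for a fixed polynomial $Q$; together with a positive lower bound on $\rmin(t)$, this gives a uniform existence time $T>0$ depending only on the initial norms.

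For the velocity, applying $\n^\b$ with $|\b|\le m+1$ and testing against $\n^\b u$, I would write the alignment integral as $\cL_{\rho\phi}u-u\cL_\phi\rho$. The symmetrization $2a\cdot(b-a)=|b|^2-|a|^2-|a-b|^2$ combined with the two-sided pointwise bound
\[
\rmin^{1/n}|z|\lesssim d(x,x+z)\lesssim \rmax^{1/n}|z|
\]
produces the required dissipation $\rmin\rmax^{-\t/n}\|u\|^2_{\dH^{m+1+\a/2}}$ on the left. The remaining transport commutator $[\n^\b,u\cdot\n]u$, kernel commutator $[\n^\b,\cL_{\rho\phi}]u$, and multiplicative term $\n^\b(u\cL_\phi\rho)$ are handled by Kato-Ponce and fractional product rules together with the boundedness $\|\cL_\phi\rho\|_{H^m}\lesssim \|\rho\|_{H^{m+\a}}$ of the diffusion operator. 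For the density, applying $\Lambda^{m+\a}$ to the continuity equation and testing against $\Lambda^{m+\a}\rho$, the only serious term is $[\Lambda^{m+\a},u]\n\rho$, which is estimated by borrowing $\a/2$ derivatives from the dissipation on $u$ via Young's inequality.

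The main technical obstacle is the nonlocal dependence of $\phi$ on $\rho$ through $d^\t(x,y) = \bigl(\int_{\O(x,y)}\rho\bigr)^{\t/n}$. A Fa\`a di Bruno expansion of $\n^\b d^{-\t}$ produces multilinear singular integrals whose typical summand has the form
\[
\int_{\T^{2n}} \frac{h(z)}{|z|^{n+\a-\t}\,d^{\t+k}(x,x+z)} \prod_{i=1}^{k}\Bigl(\int_{\O(x,x+z)} \n^{j_i}\rho(\xi)\,\dxi\Bigr)\, \delta_z u(x)\, \dz\,\dx,
\]
with $\sum j_i\le m+1$. The lower bound $d\gtrsim \rmin^{1/n}|z|$ reduces these to kernels of standard fractional order; the change of variables $\xi=|z|U_z\th$ on $\p\O_0$, the auxiliary estimate \eqref{e:Sobxi1}, and standard fractional product inequalities then bound the whole sum by $\cE_m$ times a polynomial in $\rmin^{-1}$ and $\|\rho\|_\infty$. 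I expect this to be the most delicate step, as it couples high-order Sobolev estimates on $\rho$ with the singular fractional kernel in a genuinely multilinear way.

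Finally, non-vacuum is preserved on $[0,T)$ because $\rmin(t)\ge \rmin(0)\exp\bigl(-\int_0^t\|\n\cdot u\|_\infty\,\ds\bigr)$ and Sobolev embedding gives $\|\n\cdot u\|_\infty\lesssim \|u\|_{H^{m+1}}$ provided $m\ge m(\a,n)$. The $\e$-uniform bounds combined with Aubin-Lions compactness allow passage to the limit in the weak formulation, and weak continuity in time into the Sobolev spaces of \eqref{e:class} follows from the equations themselves. Uniqueness is obtained through an $L^2\times H^{\a/2}$-type energy estimate on the difference $(u_1-u_2,\rho_1-\rho_2)$, where the leading-order alignment dissipation absorbs the hardest error term and the remainder closes by Gr\"onwall.
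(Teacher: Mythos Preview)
The paper does not actually prove this theorem; it is quoted from the earlier work \cite{RS2020} and stated here for reference. So there is no paper proof to compare against directly, but the machinery the paper develops for the continuation criterion (Section~3) mirrors what is needed, and it exposes a genuine gap in your plan.

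The problem is your density estimate. You propose to apply $\Lambda^{m+\a}$ to the continuity equation and claim the only serious term is the commutator $[\Lambda^{m+\a},u]\n\rho$. But writing $\n\cdot(\rho u)=u\cdot\n\rho+\rho\,\n\cdot u$, the second piece is worse: after testing with $\Lambda^{m+\a}\rho$ you face
\[
\int_{\T^n}\Lambda^{m+\a}\rho\;\Lambda^{m+\a}(\rho\,\n\cdot u)\dx,
\]
whose principal part $\rho\,\Lambda^{m+\a}\n\cdot u$ carries $m+\a+1$ derivatives on $u$. The dissipation you extracted from the alignment term only controls $\|u\|_{H^{m+1+\a/2}}$, and since $m+\a+1>m+1+\tfrac{\a}{2}$ for every $\a>0$, this term cannot be absorbed. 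Integration by parts does not help either, as it transfers the extra derivative onto $\rho$ and overshoots $H^{m+\a}$. The paper states this obstruction explicitly at the start of Section~3: ``placing $m+\a$ derivatives directly on the continuity equation creates a derivative overload on the velocity, which comes with the order of $m+\a+1$, higher than even the order of dissipation.''

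The remedy, used both in \cite{RS2020} and in this paper, is to abandon the direct $H^{m+\a}$ estimate on $\rho$ and work instead with the auxiliary quantity $e=\n\cdot u+\cL_\phi\rho$, whose natural regularity is $H^m$. One proves the coercivity $\|\cL_\phi\rho\|_{H^m}\sim\|\rho\|_{H^{m+\a}}$ (Proposition~2.4) so that controlling $\|e\|_{H^m}$ recovers $\|\rho\|_{H^{m+\a}}$, and then closes estimates on $Y_m=\|u\|^2_{\dH^{m+1}}+\|e\|^2_{\dH^m}$. The $e$-equation \eqref{e:etopo} trades the bad transport term for the topological source $\cT[\rho,u]$, which is genuinely multilinear in $\rho$ and requires the Fa\`a di Bruno and paraproduct analysis you allude to---but now at a derivative count that actually closes. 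Your mollification, compactness, and uniqueness outline are otherwise reasonable, but the scheme as written will not produce a closed a priori bound.
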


Here, $C_w$  stands for weakly continuous functions.

\subsection{Gagliardo-Nirenberg inequalities}
For a function $f \in H^{{s+1}} \cap W^{1,q}$, $s\geq 0$, recall the classical Gagliardo-Nirenberg inequalities
\begin{equation*}\label{ }
\| f\|_{W^{j+1,p}} \leq \| f\|_{H^{s+1}}^\th \| \n f \|_{q}^{1-\th},
\end{equation*}
where 
\begin{equation}\label{e:GNth}
\frac1p = \frac{j}{n} + \left( \frac12 - \frac{s}{n} \right) \th + \frac{1-\th}{q}, \qquad \frac{j}{s} \leq \th \leq 1.
\end{equation}
We will be interested in placing the smallest possible power $\th = \frac{j}{s}$ onto the highest norm $H^{s+1}$ without care about the resulting $q$, because eventually we simply replace $\| \n f\|_{q} \leq \|\n f \|_{\infty}$, which under our assumption will always be bounded a priori.  However, one still needs to ensure that  such a $q$ exists within the allowed range $1\leq q \leq \infty$. For this purpose let us set $\th = \frac{j}{s}$ in \eqref{e:GNth} and obtain
\[
\frac1p = \frac{j}{2s} + \frac1q \left( 1 -  \frac{j}{s}\right) .
\]
Consequently, such a $q$ exists if and only if 
\begin{equation}\label{e:GNp}
   \frac{j}{2s} \leq \frac1p \leq 1 -   \frac{j}{2s},
\end{equation}
and we have (adopting the convention for $\lesssim$)
\begin{equation}\label{e:GN}
\| f\|_{W^{j+1,p}} \lesssim \| f\|_{H^{s+1}}^{\frac{j}{s}}.
\end{equation}
In all the situations we encounter, $p \geq 2$, so  the right hand side of \eqref{e:GNp} will be automatically satisfied.

\subsection{Paraproducts}
In the product estimates of the $e$-equation we will utilize  paraproduct decompositions.  The classical Littlewood-Paley decomposition is given by the series 
\[
f = \sum_{q = 0}^\infty f_q,
\]
where $f_q$ denotes the Littlewood-Paley projection onto the $q$th dyadic shell in Fourier space, see \cite{Grafakos}.   For any $q\in \N$ we also denote
\[
f_{<q} = \sum_{p<q} f_p, \qquad f_{\sim q} = \sum_{q-2\leq p\leq q+2} f_p.
\]
Let us denote the frequency parameters by $\l_q = 2^q$. Recall that 
\[
\|f\|_{H^s}^2 \sim \sum_q \l_q^{2s}\|f_q\|_2^2, \qquad s\geq 0.
\]
Any triple product  can be decomposed into the Bony paraproduct formula:
\[
\lan f,g,h \ran: = \int_{\T^n} f g h \dx = LHH + HLH + HHL,
\]
where 
\begin{equation*}\label{}
\begin{split}
LHH & = \sum_q   \sum_{p > q-1} \lan f_q,  g_{\sim p}, h_{\sim p} \ran ,\\
HLH & = \sum_q \lan f_q, g_{<q} , h_{\sim q} \ran ,\\
HHL & = \sum_q  \lan f_q , g_{\sim q}, h_{<q} \ran.
\end{split}
\end{equation*}
We will encounter further decompositions into quartic paraproducts if one of the terms is a product of two functions, $h = h' h''$. For that purpose we note two identities
\begin{align}
( h' h'')_{<q} & = (h'_{< q+2} h''_{<q+2})_{<q} + \sum_{r >q+1} (h'_{\sim r} h''_{\sim r})_{<q} \label{e:LL},\\
(h' h'')_{\sim q} & = (h'_{<q} \, h''_{\sim q})_{\sim q}+(h'_{\sim q} \, h''_{< q})_{\sim q}+ \sum_{r >q-2} (h'_{\sim r} \, h''_{\sim r})_{\sim q}.\label{e:HH}
\end{align}

Finally, we recall the classical commutator estimate which we will use repeatedly,
\begin{equation}\label{e:classcomm}
\| \p^{m}(fg) - f \p^{m} g \|_2 \leq \|\n f \|_\infty \|g\|_{\dH^{m-1}} + \| f\|_{\dH^{m}} \| g \|_\infty,
\end{equation}
and the product formula
\begin{equation}\label{e:prodHm}
\| \p^m (f g) \|_2 \leq \| f\|_{H^m} \|g\|_\infty + \|f\|_\infty \|g\|_{H^m},
\end{equation}
both of which can be easily obtained via the Bony decomposition.

\subsection{The Faa di Bruno formula}
We will make  repeated use of the Faa di Bruno expansion formula for a multiple derivative of a composite function
\begin{equation*}\label{e:FdB0}
 \partial^{P} h(g)  =\sum_{\bj} \frac{P!}{j_1!1!^{j_1}j_2!2!^{j_2}...j_{P}!P!^{j_{P}}}h^{(j_1+...+j_{P})}(g)\prod_{k=1}^{P} \left(\partial^k g\right)^{j_k},
\end{equation*}
where the sum is over all $P$-tuples of non-negative integers $\bj=(j_1,...,j_{P})$ satisfying
\begin{align*}\label{e:jml}
1j_1+2j_2+...+P j_{P}=P.
\end{align*}
More often we will not need to know the breakdown of repeated derivatives in the product as long as the total order adds up to $P$:
\begin{equation}\label{e:FdB}
 \partial^{P} h(g)  =\sum_{\bj} C_{\bj} h^{|\bj|}(g)\prod_{i=1}^{|\bj|} \partial^{k_i} g, \qquad k_1+\dots+k_{|\bj|} = P.
\end{equation}

\subsection{Subcritical product estimates}

In what follows we encounter many subcritical terms which take on the standard forms described in the lemmas below.

\begin{lemma}\label{l:subcrit} 
Consider the singular integral 
\begin{equation*}\label{e:I}
I(x) =  \int_{\T^n} | \p^{l_1}g_1(x+\eta_1) \ldots  \p^{l_M} g_{M}(x+\eta_M)|  |\d_{\xi_1} \p^{k_1}f_1(x) \ldots \d_{\xi_{N}} \p^{k_{N}} f_{N}(x)| \frac{h(z) }{|z|^{n+\a-2+N} } \dz,
\end{equation*}
where $|\xi_i(z)| \leq | z|$, $\eta_j = \eta_j(z)$, and
\begin{equation}\label{e:klj}
l_1+\ldots+l_M+k_1+\ldots +k_{N} \leq m, \quad N, M \geq 0.
\end{equation}
Then
\begin{equation*}\label{ }
\| I\|_{2} \leq C \prod_{j=1}^M  \|g_j \|^{\a_j}_{H^{m+\a}} \times \prod_{i=1}^N  \|f_i \|^{\b_i}_{H^{m+\a}},
\end{equation*}
for some $\a_j,\b_i \geq 0$,
\[
\a_1+\ldots + \a_M + \b_1 +\ldots+\b_N <1,
\]
where $C$ depends only on the Lipschitz norms of $g_j$'s and $f_i$'s.
\end{lemma}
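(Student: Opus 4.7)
The plan is to dissect $I(x)$ via a sequence of H\"older inequalities so that each finite difference contributes fractional regularity measured by a Gagliardo-type seminorm, while each integer derivative becomes an $L^p$-norm to which the Gagliardo--Nirenberg inequality \eqref{e:GN} applies; the resulting powers of $\|g_j\|_{H^{m+\alpha}}$ and $\|f_i\|_{H^{m+\alpha}}$ will sum to strictly less than $1$ thanks to the gap $\alpha > 0$ between the total derivative order $\sum l_j + \sum k_i \leq m$ and the top smoothness level $m+\alpha$.

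First I apply H\"older in the $z$-variable to redistribute the singular weight $h(z)/|z|^{n+\alpha-2+N}$ across the $N$ finite differences. For $N \geq 2$ the $N$-fold H\"older with equal exponents $N$ yields $I(x) \leq \prod_j |\partial^{l_j} g_j(x+\eta_j)| \cdot \prod_i F_i(x)$, where
\[
F_i(x) = \left(\int_{\T^n} |\delta_{\xi_i} \partial^{k_i} f_i(x)|^N \frac{h(z)}{|z|^{n+sN}} \dz\right)^{1/N}, \qquad s = 1 - \frac{2-\alpha}{N} \in (0,1).
\]
For $N=1$ the same effect is obtained by Cauchy--Schwarz in $z$, splitting $|z|^{-n-\alpha+1}=|z|^{-n/2-s_1}|z|^{-n/2-(\alpha-1-s_1)}$ with $s_1\in(\max\{0,\alpha-1\},\min\{1,\alpha\})$ so that the second factor is square-integrable in $z$ and the first produces a Gagliardo weight identified with a fractional seminorm via \eqref{e:Sobxi1}.

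Next I take the $L^2_x$ norm and apply H\"older in $x$ with exponents $p_j$ on each $\partial^{l_j} g_j$ and $q_i$ on each $F_i$ satisfying $\sum 1/p_j + \sum 1/q_i = 1/2$, chosen compatibly with the admissibility constraint \eqref{e:GNp}. Minkowski's integral inequality (valid since $N\leq q_i$) reduces $\|F_i\|_{L^{q_i}}$ to a fractional Sobolev norm of $\partial^{k_i} f_i$, which by the pointwise mean-value bound $\|\delta_{\xi_i}\partial^{k_i}f_i\|_{L^{q_i}}\leq|z|\|\nabla\partial^{k_i} f_i\|_{L^{q_i}}$ together with the finite integral $\int h(z)|z|^{-n+(2-\alpha)}\dz$ is controlled by $\|f_i\|_{W^{k_i+1,q_i}}$. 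Gagliardo--Nirenberg \eqref{e:GN} then yields $\|\partial^{l_j} g_j\|_{L^{p_j}}\lesssim \|g_j\|_{H^{m+\alpha}}^{\alpha_j}$ and $\|f_i\|_{W^{k_i+1,q_i}}\lesssim\|f_i\|_{H^{m+\alpha}}^{\beta_i}$ with $\alpha_j=(l_j-1)/(m+\alpha-1)$ and $\beta_i=k_i/(m+\alpha-1)$; the Lipschitz factors are absorbed into the universal $C$, and whenever $l_j\leq 1$ or $k_i=0$ the corresponding exponent is simply $0$.

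The hardest part will be the bookkeeping: one must coordinate the $z$-H\"older exponents, the $x$-H\"older exponents $p_j,q_i$, the admissibility range \eqref{e:GNp}, and the fractional choice of $s$ (or $s_1$) so that the total exponent $\sum\alpha_j+\sum\beta_i$ is strictly below $1$, not merely bounded by $1$. This strictness is secured by the positivity of $\alpha$ together with the flexibility to retain fractional regularity (picking $s<\alpha$ or $s_1<\alpha$) in the difference factor carrying the highest-order derivative when the naive sum $\sum_j(l_j-1)_+/(m+\alpha-1)+\sum_i k_i/(m+\alpha-1)\leq m/(m+\alpha-1)$ approaches $1$. The structural hypothesis $\sum l_j+\sum k_i\leq m<m+\alpha$ is exactly what leaves room for this subcritical gain.
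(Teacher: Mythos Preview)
Your overall architecture (H\"older in $z$, then H\"older in $x$, then Gagliardo--Nirenberg) is the same as the paper's, but the execution has two concrete gaps.

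\textbf{First gap: the $g_j$ factors cannot be pulled outside the $z$-integral.} You write that the $N$-fold H\"older in $z$ yields $I(x)\le \prod_j |\partial^{l_j}g_j(x+\eta_j)|\cdot\prod_i F_i(x)$, but $\eta_j=\eta_j(z)$ depends on $z$, so the first product is still $z$-dependent while the $F_i$ have already absorbed the $z$-integral. This step is simply invalid as written. The paper resolves it by including the $g_j$ factors in the H\"older-in-$z$ splitting as well, with exponents $q_j$, and reserving an additional factor with exponent $2$ carrying the integrable weight $h(z)|z|^{-n/2+\delta}$; the full list of H\"older exponents $(p_1,\dots,p_N,q_1,\dots,q_M,2)$ then satisfies $\sum 1/p_i+\sum 1/q_j+1/2=1$ and, after integrating in $x$, each $g_j$ piece becomes $\|g_j\|_{W^{l_j,q_j}}$ by translation invariance.

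\textbf{Second gap: the mean-value bound destroys the fractional gain you need.} In your second paragraph you replace the $\xi_i$-difference by a full gradient, $\|\delta_{\xi_i}\partial^{k_i}f_i\|_{q_i}\le |z|\,\|\nabla\partial^{k_i}f_i\|_{q_i}$, landing on $\|f_i\|_{W^{k_i+1,q_i}}$ and hence $\beta_i=k_i/(m+\alpha-1)$. With $\alpha_j=(l_j-1)_+/(m+\alpha-1)$ the total is at most $m/(m+\alpha-1)$, which is $\ge 1$ whenever $\alpha\le 1$; for instance $M=0$, $N=1$, $k_1=m$ gives $\beta_1=m/(m+\alpha-1)>1$. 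Your third paragraph acknowledges the danger and speaks of ``retaining fractional regularity'', but the concrete step you already performed has discarded it. The paper's fix is exactly not to take the mean-value step: keep the genuine fractional norm $\|f_i\|_{W^{k_i+s,p_i}}$ with $s=(\alpha-2+N+2\delta)/N\in(0,1)$, so that $\beta_i=(k_i-1+s)/(m+\alpha-1)$ and the total exponent is at most $(m+\alpha-2+2\delta-M)/(m+\alpha-1)<1$ for $\delta$ small. The gain $N(1-s)=2-\alpha-2\delta$ is precisely what closes the estimate in the range $\alpha\le 1$.
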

\begin{proof} We can assume without loss of generality that all $k_i >0$. Indeed, for those that are equal to zero, we replace the finite difference by the gradient thereby lowering $N$, still satisfying \eqref{e:klj}. 

If after this $N=M=0$, then the lemma is trivial. Otherwise,  if we still have $\a - 2 + N <0$, then the singularity is integrable, and we estimate
\[
|I(x)|^2 \leq  C  \int_{\T^n} | \p^{l_1}g_1(x+\eta_1) \ldots  \p^{l_M} g_{M}(x+\eta_M)|^2|\d_{\xi_1} \p^{k_1}f_1(x) \ldots \d_{\xi_{N}} \p^{k_{N}} f_{N}(x)|^2 \frac{h(z) }{|z|^{n+\a-2+N} } \dz.
\]
Let us define 
\[
p_i = \frac{2m}{k_i}, \qquad q_j = \frac{2m}{l_j}.
\]
Then by the H\"older ineqiality,
\[
\| I\|_2 \leq \| g_1\|_{W^{l_1, q_1}} \ldots \| g_M\|_{W^{l_M, q_M}} \times \| f_1\|_{W^{k_1, p_1}} \ldots \| f_N\|_{W^{k_N, p_N}}.
\]
Applying the Gagliardo-Nirenberg inequality \eqref{e:GN} we obtain
\[
\| I\|_2 \leq \| g_1\|^{\a_1}_{H^{m+\a}} \ldots \| g_M\|^{\a_M}_{H^{m+\a}} \times\| f_1\|^{\b_1}_{H^{m+\a}} \ldots \| f_N\|^{\b_N}_{H^{m+\a}},
\]
where
\[
\a_j =  \frac{l_j-1}{m+\a-1}, \quad \b_i = \frac{k_i-1}{m+\a-1}.
\]
These exponents add up to $\frac{m-N-M}{m+\a - 1} <1$,  as desired, because $N+M \geq 1$.

If $\a - 2 +N \geq 0$, let us fix a small $\d>0$ to be determined later and define
\[
s = \frac{\a-2 + N + 2\d }{N}.
\]
If $\d$ is small enough this exponent satisfies $0<s<1$. Let us now distribute the singularity as follows
\[
| I(x) | =  \int_{\T^n} \frac{|\d_{\xi_1} \p^{k_1}f_1(x)|}{|z|^{\frac{n}{p_1} + s}} \dots  \frac{|\d_{\xi_N} \p^{k_N}f_N(x)|}{|z|^{\frac{n}{p_N} + s}} \frac{| \p^{l_1}g_1(x+\eta_1)|}{ |z|^{ \frac{n}{q_1} - \frac{\d}{M}}} \ldots \frac{| \p^{l_M}g_M(x+\eta_M)|}{ |z|^{ \frac{n}{q_M} - \frac{\d}{M}}}\frac{h(z) }{|z|^{\frac{n}{2} - \d}} \dz,
\]
and as before apply the H\"older inequality,
\begin{multline*}
| I(x) | \leq \left(\int_{\T^n} \frac{|\d_{\xi_1} \p^{k_1}f_1(x)|^{p_1}}{|z|^{n+ sp_1}} h(z) \dz \right)^{\frac{1}{p_1}} \dots    \left(\int_{\T^n} \frac{|\d_{\xi_N} \p^{k_N}f_N(x)|^{p_N}}{|z|^{n + sp_N}}  h(z) \dz \right)^{\frac{1}{p_N}} \\ \times
\left(\int_{\T^n} \frac{| \p^{l_1}g_1(x+\eta_1)|^{q_1}}{|z|^{n-\d q_1}} h(z) \dz \right)^{\frac{1}{q_1}}  \dots \left(\int_{\T^n} \frac{| \p^{l_M}g_M(x+\eta_M)|^{q_M}}{|z|^{n-\d q_M}} h(z) \dz \right)^{\frac{1}{q_M}} .
\end{multline*}
Thus,
\begin{equation*}\label{}
\begin{split}
\| I\|_2 & \leq  \| g_1\|_{W^{l_1, q_1}} \ldots \| g_M\|_{W^{l_M, q_M}} \times  \| f_1\|_{W^{k_1+s, p_1}} \ldots \| f_N\|_{W^{k_N+s, p_N}}\\
&  \lesssim \| g_1\|^{\a_1}_{H^{m+\a}} \ldots \| g_M\|^{\a_M}_{H^{m+\a}}\| f_1\|^{\b_1}_{H^{m+\a}} \ldots \| f_N\|^{\b_N}_{H^{m+\a}}, 
\end{split}
\end{equation*}
where 
\[
\a_j =  \frac{l_j-1}{m+\a-1}, \quad \b_i = \frac{k_i-1+s}{m+\a-1}.
\]
Clearly, $\a_1 + \dots +\b_N <1$, as desired.
\end{proof}

We can prove a similar estimate for a more singular integral if one of the integrands can absorb $m+1+\frac{\a}{2}$ derivatives.

\begin{lemma}\label{l:subcrit2} 
Consider the singular integral 
\begin{multline*}\label{e:I}
II(x) =  \int_{\T^n} | \p^{l_1}g_1(x+\eta_1) \ldots  \p^{l_M} g_{M}(x+\eta_M)|  |\d_{\xi_1} \p^{k_1}f_1(x) \ldots \d_{\xi_{N}} \p^{k_{N}} f_{N}(x)| \\
\times  |\d_z \p^d u(x)| \frac{h(z) }{|z|^{n+\a-1+N} } \dz,
\end{multline*}
where $|\xi_i(z)| \leq | z|$, $\eta_j = \eta_j(z)$, and
\begin{equation*}\label{e:kljlemma2}
l_1+\ldots+l_M+k_1+\ldots +k_{N} + d \leq m+1, \quad N \geq 0, \quad d\geq 1.
\end{equation*}
Then
\begin{equation*}\label{ }
\| I\|_{2} \leq C \|u\|_{H^{m+1 + \frac{\a}{2}}}^\g \prod_{j=1}^M  \|g_j \|^{\a_j}_{H^{m+\a}} \times \prod_{i=1}^N  \|f_i \|^{\b_i}_{H^{m+\a}},
\end{equation*}
for some $\a_j,\b_i \geq 0$,
\[
\a_1+\ldots + \a_M + \b_1 +\ldots+\b_N + \g <1,
\]
where $C$ depends only on the Lipschitz norms of $u$, $g_j$'s and $f_i$'s.
\end{lemma}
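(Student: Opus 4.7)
The plan is to extend the two-case argument of Lemma \ref{l:subcrit}: although our new integrand has an extra finite-difference factor $\delta_z\partial^d u$ and one extra power in the singularity ($|z|^{-1}$ worse), the overall scaling balance is preserved, so a parallel distribution of weights in $|z|$ will work. The new feature is that the $u$-factor can be interpolated against the \emph{stronger} norm $\|u\|_{H^{m+1+\alpha/2}}$, providing an extra $\alpha/2$ worth of derivative which is precisely what we need to keep the sum of Gagliardo-Nirenberg exponents strictly below $1$.

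First, reduce to $k_i\geq 1$ for all $i$, as in Lemma \ref{l:subcrit}. In the non-integrable range $\alpha - 1 + N \geq 0$, distribute the singularity as
\[
|II(x)| \leq \int \prod_{i=1}^N \frac{|\delta_{\xi_i}\partial^{k_i} f_i(x)|}{|z|^{n/p_i + s}}  \cdot \frac{|\delta_z \partial^d u(x)|}{|z|^{n/r + s_u}} \cdot \prod_{j=1}^M \frac{|\partial^{l_j} g_j(x+\eta_j)|}{|z|^{n/q_j - \delta/M}} \cdot \frac{h(z)}{|z|^{n/2 - \delta}} \, dz,
\]
with Hölder exponents $p_i, q_j, r, 2$ summing to $1$, choosing $s_u \in (0, \alpha/2]$ slightly below $\alpha/2$ and $s \in (0,1)$ so that the powers of $|z|$ match, i.e.\ $Ns + s_u = \alpha - 1 + N + 2\delta$. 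Applying Hölder in $(x, z)$ and the $L^p$-version of \eqref{e:Sobxi1} then yields
\[
\|II\|_2 \lesssim \prod_j \|g_j\|_{W^{l_j, q_j}} \prod_i \|f_i\|_{W^{k_i+s, p_i}} \|u\|_{W^{d+s_u, r}}.
\]
The ``integrable'' corner $N=0$, $\alpha<1$ is handled separately by direct Cauchy--Schwarz in $z$, as in the first half of Lemma \ref{l:subcrit}, with a fractional $s_u > 1-\alpha$ absorbed onto the $u$-factor to neutralize the mild singularity.

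Finally, Gagliardo--Nirenberg \eqref{e:GN} applied with endpoint $H^{m+\alpha}$ for each $f_i, g_j$ and endpoint $H^{m+1+\alpha/2}$ for $u$ produces interpolation exponents
\[
\alpha_j = \frac{l_j-1}{m+\alpha-1}, \qquad \beta_i = \frac{k_i-1+s}{m+\alpha-1}, \qquad \gamma = \frac{d-1+s_u}{m+\alpha/2}.
\]
Using the constraint $\sum l_j+\sum k_i+d\leq m+1$ together with the balance $Ns+s_u=\alpha-1+N+2\delta$, a direct computation gives
\[
\sum_j \alpha_j + \sum_i \beta_i + \gamma \;\leq\; 1 - \frac{M - 2\delta}{m+\alpha-1} - (d-1+s_u)\,\frac{1 - \alpha/2}{(m+\alpha-1)(m+\alpha/2)},
\]
and the final term is strictly negative thanks to $\alpha<2$ and $d\geq 1$ (so $d-1+s_u>0$), which makes the whole expression strictly less than $1$ for $\delta$ sufficiently small --- this is the quantitative manifestation of the ``extra $\alpha/2$ of regularity'' for $u$. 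The main technical obstacle is the joint admissibility of the parameters across all allowed tuples $(N,d,l_j,k_i)$: when $\alpha$ is close to $2$ the value $s$ approaches $1$ and one must switch to the second-difference Slobodeckii formulation \eqref{e:Sob12}; in the corner $N=0, \alpha<1$ the admissible window for $s_u$ is nonempty precisely because $\alpha>0$. These are localized case distinctions mirroring the bookkeeping of Lemma \ref{l:subcrit}.
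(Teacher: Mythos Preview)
Your proposal is correct and follows essentially the same strategy as the paper's proof: distribute the singularity among the factors via H\"older, then apply Gagliardo--Nirenberg. The only difference is bookkeeping. The paper uses a \emph{single} fractional exponent $s=\frac{\alpha-1+N+2\delta'}{N+1}$ for all $N+1$ finite-difference factors (the $f_i$'s \emph{and} the $u$-factor), with H\"older exponents $p_i=\tfrac{2(m+1)}{k_i}$, $q_j=\tfrac{2(m+1)}{l_j}$, $r=\tfrac{2(m+1)}{d}$, and then interpolates $u$ against the intermediate space $H^{m+\alpha+\delta}$ (which embeds into $H^{m+1+\alpha/2}$ since $\alpha+\delta<1+\alpha/2$). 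You instead decouple $s_u$ from $s$ and interpolate $u$ directly against $H^{m+1+\alpha/2}$; your balance $Ns+s_u=\alpha-1+N+2\delta$ and the resulting exponent calculation are correct, and your observation that the gain $1-\alpha/2>0$ in the $u$-endpoint is what drives the sum below $1$ is exactly the mechanism at work. The paper's choice is slightly more economical (one parameter instead of two, no separate corner case since the uniform $s$ stays in $(0,1)$ throughout the range $\alpha-1+N\geq 0$), while yours makes the role of the extra $\alpha/2$ regularity on $u$ more transparent in the final inequality.
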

\begin{proof} As before, we can assume without loss of generality that all $k_i >0$, and  $\a - 1 + N \geq 0$.

Let us fix two small parameters $\d' \ll \d$ so that $\a+ \d < \frac{\a}{2} + 1$, and define
\[
s = \frac{\a-1 + N+ 2\d' }{N+1}, \quad p_i = \frac{2(m+1)}{k_i}, \quad q_j = \frac{2(m+1)}{l_j}, \quad r= \frac{2(m+1)}{d}.
\]
Let us now distribute the singularity as follows
\begin{multline*}
| II(x) | =  \int_{\T^n}  \frac{| \p^{l_1}g_1(x+\eta_1)|}{ |z|^{ \frac{n}{q_1} - \frac{\d'}{M}}} \ldots \frac{| \p^{l_M}g_M(x+\eta_M)|}{ |z|^{ \frac{n}{q_M} - \frac{\d'}{M}}} \frac{|\d_{\xi_1} \p^{k_1}f_1(x)|}{|z|^{\frac{n}{p_1} + s}} \dots  \\ \dots \frac{|\d_{\xi_N} \p^{k_N}f_N(x)|}{|z|^{\frac{n}{p_N} + s}} \frac{| \d_z \p^{d}u(x)|}{ |z|^{ \frac{n}{r} +s}}\frac{h(z) }{|z|^{\frac{n}{2} - \d'}} \dz,
\end{multline*}
and as before apply the H\"older inequality,
\begin{equation*}\label{}
\begin{split}
| II(x) | & \leq \left(\int_{\T^n} \frac{| \p^{l_1}g_1(x+\eta_1)|^{q_1}}{|z|^{n-\d q_1}} h(z) \dz \right)^{\frac{1}{q_1}}  \dots \left(\int_{\T^n} \frac{| \p^{l_M}g_M(x+\eta_M)|^{q_M}}{|z|^{n-\d q_M}} h(z) \dz \right)^{\frac{1}{q_M}} \\
&\times \left(\int_{\T^n} \frac{|\d_{\xi_1} \p^{k_1}f_1(x)|^{p_1}}{|z|^{n+ sp_1}} h(z) \dz \right)^{\frac{1}{p_1}} \dots    \left(\int_{\T^n} \frac{|\d_{\xi_N} \p^{k_N}f_N(x)|^{p_N}}{|z|^{n + sp_N}}  h(z) \dz \right)^{\frac{1}{p_N}} \\ 
& \times \left(\int_{\T^n} \frac{|\d_{z} \p^{d} u(x)|^{r}}{|z|^{n+ sr}} h(z) \dz \right)^{\frac{1}{r}}.
\end{split}
\end{equation*}
Thus,
\begin{equation*}\label{}
\begin{split}
\| II\|_2 & \leq  \| g_1\|_{W^{l_1, q_1}} \ldots \| g_M\|_{W^{l_M, q_M}} \times  \| f_1\|_{W^{k_1+s, p_1}} \ldots \| f_N\|_{W^{k_N+s, p_N}} \| u \|_{W^{d+s, r}} \\
&  \lesssim \| g_1\|^{\a_1}_{H^{m+\a}} \ldots \| g_M\|^{\a_M}_{H^{m+\a}}\| f_1\|^{\b_1}_{H^{m+\a}} \ldots \| f_N\|^{\b_N}_{H^{m+\a}} \| u \|_{H^{m+\a+\d}}^\g ,
\end{split}
\end{equation*}
where 
\[
\a_j =  \frac{l_j-1}{m+\a-1}, \quad \b_i = \frac{k_i-1+s}{m+\a-1}, \quad \g = \frac{d-1+s}{m+\a-1+\d}.
\]
The sum of all exponents is less than
\[
\frac{m- N}{m+\a-1} + s \left( \frac{N}{m+\a-1} + \frac{1}{m+\a - 1+\d} \right).
\]
It remains to notice that if $\d'$ were $0$ then the above expression would be strictly less than $1$. So, by continuity we can pick a small $\d'>0$ for which the sum is still $<1$.  Thus, $\a_1 + \dots+\b_N +\g <1$, as desired.
\end{proof}

\subsection{Coercivity of the topological diffusion}

The last tool we will need in the proof of the continuation criterion is the coercivity estimate for the topological diffusion
\[
\cL_\phi f (x) = p.v. \int_{\T^n} \d_z f(x) \phi(x,x+z) \dz.
\]
It states a very much intuitive fact that $\cL_\phi$ acts as a derivative of order $\a$.  In view of the highly non-linear dependence on the density in the kernel $\phi$ this fact requires a separate treatment.  An estimate of this sort was already established in \cite{RS2020,STtopo}, however the dependence of residual constants was not traced sharply to the gradients of $\rho$, which is important for our particular application. In this section we present a much different and shorter proof based on \lem{l:subcrit}.

\begin{proposition}\label{p:coerc} For any $\rho\in H^{m+\a}$, $0<\a<2$, we have the following estimates
\begin{equation*}\label{e:coerc}
\begin{split}
	\|	\cL_{\phi} \rho \|_{\dH^m} & \leq 2\rmin^{-\t/n} \| \rho \|_{\dH^{m+\a}} + C_1, \\
		\|	\cL_{\phi} \rho \|_{\dH^m} & \geq \frac12 \rmax^{-\t/n} \| \rho \|_{\dH^{m+\a}} - C_2,
\end{split}
\end{equation*}
where $C_1,C_2$ are constants which depend only on $\rmin,\rmax$, and $\| \n\rho\|_\infty$.
\end{proposition}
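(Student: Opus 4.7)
The plan is to identify $\cL_\phi$ as, modulo subcritical errors, the multiplication operator $\rho^{-\t/n}(x)$ composed with a standard truncated fractional Laplacian of order~$\a$, and then deduce both bounds by triangle inequality.

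First I would isolate the principal part of the kernel. Writing $d^n(x,x+z) = \int_{\O(0,z)}\rho(x+\xi)\dxi$ and Taylor-expanding $\rho$ around $x$ yields
\[
d^{\t}(x,x+z) = \rho^{\t/n}(x)\,|\O_0|^{\t/n}\,|z|^{\t} + O(|z|^{\t+1}),
\]
with the remainder depending only on $\rmin$, $\rmax$, and $\|\n\rho\|_\infty$. Hence
\[
\phi(x,x+z) = \rho^{-\t/n}(x)\,K(z) + \phi_1(x,z), \qquad K(z):=\frac{|\O_0|^{-\t/n}h(z)}{|z|^{n+\a}},
\]
with $|\phi_1(x,z)|\lesssim h(z)/|z|^{n+\a-1}$. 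Setting $T_K f(x):=\int_{\T^n}\d_z f(x)\, K(z)\dz$, a direct Fourier-side computation gives $\|T_K f\|_2 = c_0\|f\|_{\dH^{\a}} + O(\|f\|_2)$ for a fixed $c_0>0$. Combined with the pointwise sandwich $\rmax^{-\t/n}\leq\rho^{-\t/n}(x)\leq\rmin^{-\t/n}$, the principal quantity $\rho^{-\t/n}(x)T_K\p^m\rho(x)$ thus has $L^2$-norm enjoying the sandwich of \prop{p:coerc}, up to the multiplicative slack absorbed into the factors $2$ and $\tfrac{1}{2}$.

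Next I would apply $\p^m$ to the identity
\[
\cL_\phi\rho(x) = \rho^{-\t/n}(x)T_K\rho(x) + \int_{\T^n}\d_z\rho(x)\,\phi_1(x,z)\dz
\]
and distribute by Leibniz. The only top-order contribution is the principal term $\rho^{-\t/n}(x)T_K\p^m\rho(x)$; every remaining term distributes at least one derivative elsewhere. Those falling on the scalar $\rho^{-\t/n}(x)$ yield, via the Faa di Bruno formula \eqref{e:FdB}, sums $\rho^{-\t/n-|\bj|}(x)\prod_i\p^{k_i}\rho(x)\cdot T_K\p^{k_0}\rho(x)$ with $k_0+\sum k_i = m$, $\sum k_i\geq 1$, controlled in $L^2$ by H\"older paired with $\|T_K\p^{k_0}\rho\|_{p_0}\lesssim\|\rho\|_{W^{k_0+\a,\,p_0}}$ and the Gagliardo-Nirenberg inequality \eqref{e:GN} on the remaining factors, giving a bound of the form $\|\rho\|_{H^{m+\a}}^{\theta}$ with $\theta<1$ (since the GN exponents sum to $(m+\a-1-|\bj|)/(m+\a-1)<1$). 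Those falling on $\phi_1(x,z)$ produce, after a second application of \eqref{e:FdB} to the composition $d^{-\t}=(\int_{\O(0,z)}\rho(x+\xi)\dxi)^{-\t/n}$ --- in which each of the $|\bj|$ inner derivatives brings out an integral carrying a factor $|z|^{n}$ from the domain scaling $\xi=|z|U_z\th$, exactly cancelling the $|z|^{-n|\bj|}$ from $(d^n)^{-\t/n-|\bj|}$ --- integrands of the form $\prod_j\p^{l_j}\rho(x+\eta_j)\cdot\d_z\p^{k_0}\rho(x)$ with $|\eta_j|\leq|z|$, total derivative count $\leq m$, and net residual singularity $|z|^{-(n+\a-1)}$. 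These match precisely the kernel form $|z|^{-(n+\a-2+N)}$ of \lem{l:subcrit} with $N=1$, and are therefore bounded by $\|\rho\|_{H^{m+\a}}^{\theta}$ with $\theta<1$ as well.

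Both bounds of \prop{p:coerc} then follow by forward and reverse triangle inequality applied to the principal term and the subcritical remainders, using Young's inequality on $\|\rho\|_{H^{m+\a}}^\theta$ to absorb the remainders into the constants $C_1$, $C_2$. The main technical obstacle is the combinatorial bookkeeping in the Faa di Bruno expansion of $\p^P\phi_1$: one must verify uniformly across all arising terms the $|z|$-singularity balance $|z|^{-n|\bj|}\cdot|z|^{n|\bj|}=1$ described above, so that the naive singularity $|z|^{-(n+\a)}$ carried by $\phi$ itself collapses to $|z|^{-(n+\a-1)}$ for $\phi_1$ and stays at $|z|^{-(n+\a-1)}$ under every derivative $\p^P$, matching the form demanded by \lem{l:subcrit}.
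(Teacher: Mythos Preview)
Your overall plan --- freeze the topological coefficient to split $\cL_\phi\rho = \rho^{-\t/n}T_K\rho + \text{remainder}$, extract the top-order piece $\rho^{-\t/n}T_K\p^m\rho$, and bound all commutator and remainder terms subcritically via Faa di Bruno, H\"older, Gagliardo--Nirenberg and \lem{l:subcrit} --- is exactly the paper's route. Your handling of the Leibniz terms hitting $\rho^{-\t/n}$ by direct Faa di Bruno and GN is a legitimate variant of the paper's use of the commutator estimate~\eqref{e:classcomm}.

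The gap is in the remainder. You assert that after Faa di Bruno on $\p^P\phi_1$ the terms take the form $\prod_j\p^{l_j}\rho(x+\eta_j)$ (pure products, no internal finite differences), with total derivative count~$\leq P$ and singularity $|z|^{-(n+\a-1)}$, so that together with the outer $\d_z\p^{k_0}\rho$ they fit \lem{l:subcrit} with $N=1$. But the ``$|z|^{-n|\bj|}\cdot|z|^{n|\bj|}=1$'' balance you invoke only shows that $x$-derivatives of $\phi$ keep the singularity at $|z|^{-(n+\a)}$; it does \emph{not} supply the extra factor of $|z|$ that distinguishes $\phi_1$ from $\phi$. That gain comes from the cancellation in $\phi_1 = \phi - \rho^{-\t/n}K$, and after differentiation this cancellation materialises as an additional \emph{finite difference}, not a free power of $|z|$. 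Writing $\phi_1 = K(z)R_z$ and using the representation
\[
R_z \;=\; \fint_{\O_0}\d_\xi\rho(x)\,\dth\int_0^1\Bigl(\l\rho(x)+(1-\l)\fint_{\O_0}\rho(x+\xi)\,\dth\Bigr)^{-1-\t/n}\dl,
\]
one sees that every term in $\p^{P} R_z$ carries a factor $\d_\xi\p^{j}\rho$. Combined with the outer $\d_z\p^{k_0}\rho$, the remainder terms therefore land in \lem{l:subcrit} with $N=2$ and singularity $|z|^{-(n+\a)}$ --- this is how the paper closes the estimate. If instead you trade $\d_\xi\p^{j}\rho$ for $|z|\,\p^{j+1}\rho(x+\xi')$ via the mean value theorem to force $N=1$, the total derivative count rises to $m+1$, outside the hypothesis of \lem{l:subcrit}. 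Either way, the triple ``$N=1$, derivative count $\leq m$, singularity $|z|^{-(n+\a-1)}$'' is not simultaneously attainable in the product form you wrote; the fix is simply to keep the second finite difference and invoke \lem{l:subcrit} with $N=2$.
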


\begin{proof}
We start by ``freezing the coefficients'' in the topological part of the kernel:
\[
\cL_\phi \rho  = \rho^{-\t/n} \L_\a \rho + \cR \rho,
\]
where
\begin{equation*}\label{}
\begin{split}
\cR \rho & =  \int_{\T^n}  \d_z  \rho\, R_z \frac{h(z)}{|z|^{n+\a}} \dz, \\
 R_z & =   \frac{1}{\left[\fint_{\O(0,z)} \rho(x+\xi) \dxi\right]^{\frac{\t}{n}}} - \frac{1}{\rho^{\frac{\t}{n}}(x)} ,
\end{split}
\end{equation*}
and $\L_\a$ represents the pure fractional Laplacian with  cutoff $h$.

Then
\begin{equation*}\label{}
\begin{split}
\| \p^m \cL_\phi \rho\|_2 & \leq \| \rho^{-\t/n} \p^m \L_\a \rho\|_2 + \| \p^m (\rho^{-\t/n}  \L_\a \rho) - \rho^{-\t/n} \p^m \L_\a \rho \|_2 + \|\p^m \cR \rho\|_2,\\
\| \p^m \cL_\phi \rho\|_2 & \geq \| \rho^{-\t/n} \p^m \L_\a \rho\|_2 - \| \p^m (\rho^{-\t/n}  \L_\a \rho) - \rho^{-\t/n} \p^m \L_\a \rho \|_2 - \|\p^m \cR \rho\|_2.
\end{split}
\end{equation*}
Clearly,
\[
\| \rho^{-\t/n} \p^m \L_\a \rho\|_2 \sim \|\rho\|_{H^{m+\a}}.
\]
It remains to show that all the other terms are of smaller order. Let us start with the commutator. We have by \eqref{e:classcomm},
\[
\| \p^m (\rho^{-\t/n}  \L_\a \rho) - \rho^{-\t/n} \p^m \L_\a \rho \|_2 \leq \| \n \rho^{-\t/n}\|_\infty  \|\rho\|_{H^{m+\a-1}} + \| \rho^{-\t/n}  \|_{H^m} \|  \L_\a \rho \|_\infty.
\]
Then by interpolation,
\[
 \lesssim \e \|\rho\|_{H^{m+\a}} + C +  \|\rho\|_{H^m} \| \rho \|_{W^{\a+\d,\infty}}.
 \]
Applying Gagliardo-Nirenberg inequalities to the last term we further obtain
\[
\lesssim  \e \|\rho\|_{H^{m+\a}} + C +\|\rho\|_{H^{m+\a}}^{ \frac{m-1}{m+\a -1} + \frac{2(\a+\d-1)}{2(m+\a-1) - n}  }.
\]
One can check that the last exponent is strictly less than $1$ if $\d>0$ is small enough. So,  the whole term is
\[
\lesssim  \e \|\rho\|_{H^{m+\a}} + C.
\]

Let us now turn to the remainder term $\p^m \cR$. Its Leibnitz expansion consists of terms
\begin{equation}\label{e:LR}
 \int_{\T^n} \d_z  \p^l  \rho\, \p^{m-l} R_z  \frac{h(z)}{|z|^{n+\a}} \dz.
\end{equation}
We use the following representation for $R_z$
\[
R_z = \fint_{\O_0}\d_\xi \rho(x) \dth \int_0^1\left(\l \rho(x) + (1-\l)\fint_{\O_0} \rho(x+\xi) \dth \right)^{-1-\tau/n} \dl.
\]
Applying $m-l$ derivatives to $R_z$ and using Leibnitz and  Faa di Bruno formula \eqref{e:FdB} we can see that the expansion will consist of terms (taking the communication integrals outside)
\[
\d_\xi \p^{m-l-p} \rho(x) \prod_{i=1}^{|\bj|} \p^{k_i} \rho(x+\xi_i),
\]
up to a bounded function depending on $\l,\rho$, and where all $|\xi_i | \leq |z|$, and $k_1+\dots+k_{|\bj|} = p$. Thus, the integral \eqref{e:LR} will consist of terms bounded by
\begin{equation*}\label{e:LRaux}
 \int_{\T^n} |\d_z  \p^l  \rho| |\d_\xi \p^{m-l-p} \rho(x)| \left| \prod_{i=1}^{|\bj|} \p^{k_i} \rho(x+\xi_i)\right| \frac{h(z)}{|z|^{n+\a}} \dz.
\end{equation*}
We can see that these integrals fall under the scope of  \lem{l:subcrit} with $N= 2$. This proves that the entire residual term is estimated by
\[
\| \p^m \cR \rho \|_2 \lesssim \|\rho\|_{H^{m+\a}}^\th, \qquad \th <1.
\]
The generalized Young's inequality finishes the proof.
\end{proof}

\subsection{Sobolev norm of $\rho^{-\t/n}$}
The last technical ingredient is the Sobolev bound on the power function of the density.
\begin{lemma}\label{l:powerrho}
We have
\[
\|\rho^{-\t/n} \|_{H^{m+\a}} \lesssim \|\rho \|_{H^{m+\a}} + C(\|\n \rho\|_\infty).
\]
\end{lemma}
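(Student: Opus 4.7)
The goal is a Moser-type estimate for the composition $F(\rho)$ with $F(x) = x^{-\t/n}$. Since the criterion hypothesis $\|\n\rho\|_\infty \lesssim 1$ together with preservation of positivity for non-vacuous solutions ensures that $\rho$ stays a priori in a bounded subinterval $[\rmin, \rmax] \subset (0,\infty)$, one may first extend $F$ smoothly from this interval to all of $\R$ so that all derivatives of $F$ are uniformly bounded, without altering $F(\rho)$. The approach I would take is a direct Bony paraproduct linearization built on the Littlewood--Paley framework from Section~2.5.

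Using the telescoping identity $F(\rho) = F(0) + \sum_{q \geq 0}[F(\rho_{<q+1}) - F(\rho_{<q})]$ and Taylor's theorem, split
\[
F(\rho) - F(0) = \sum_{q} F'(\rho_{<q})\, \rho_q + \sum_q G_q, \qquad |G_q| \leq \tfrac{1}{2}\|F''\|_\infty\, \rho_q^2.
\]
The linear sum is a standard paraproduct: since $F'(\rho_{<q})$ has frequency support $\lesssim \l_q$ and $\rho_q$ is localized at scale $\l_q$, the product is essentially confined to the $q$th dyadic shell, and Parseval combined with uniform boundedness of $F'$ gives
\[
\Big\|\sum_q F'(\rho_{<q})\rho_q\Big\|_{H^{m+\a}}^2 \lesssim \sum_q \l_q^{2(m+\a)}\|\rho_q\|_2^2 \sim \|\rho\|_{H^{m+\a}}^2.
\]
For the quadratic remainder, Bernstein's inequality applied to $\rho_q$ (whose gradient is bounded) yields $\|\rho_q\|_\infty \lesssim \l_q^{-1}\|\n\rho\|_\infty$, hence $\|G_q\|_2 \lesssim \l_q^{-1}\|\n\rho\|_\infty \|\rho_q\|_2$. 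Almost-orthogonal summation then produces $\|\sum_q G_q\|_{H^{m+\a}}^2 \lesssim \|\n\rho\|_\infty^2 \|\rho\|_{H^{m+\a}}^2$, which is absorbed into the implicit constant since $\|\n\rho\|_\infty$ is a priori controlled. Taking square roots delivers the desired bound.

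The principal obstacle is the summation of the quadratic remainders $\sum_q G_q$: unlike the linear paraproduct, $G_q$ is not sharply frequency-localized in the $q$th shell because the multiplier $\int_0^1(1-s) F''(\rho_{<q} + s\rho_q)\,ds$ has unrestricted frequency content. I would resolve this by either (i) a further paraproduct expansion of $F''(\cdots)$ so that Bernstein gains an extra $\l_q^{-1}$ for each high-frequency leak, or (ii) a Schur-type bound on the tail sums $\sum_{q \geq p}\|G_q\|_2$ with dyadic weights of the form $\l_p^{-\epsilon}$, both of which close the argument with an $\epsilon$-loss that interpolates harmlessly against $\|\rho\|_{H^{m+\a}}$.
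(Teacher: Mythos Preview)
Your paralinearization strategy is sound in spirit, but the claim carrying the linear piece is incorrect: $F'(\rho_{<q})$ is a \emph{nonlinear} function of a band-limited input and therefore does \emph{not} have Fourier support confined to $\{|\xi|\lesssim \l_q\}$; a smooth function of a trigonometric polynomial generically has full spectrum. Consequently the blocks $F'(\rho_{<q})\rho_q$ are not almost-orthogonal and the Parseval step you invoke fails as written. This is exactly the same obstruction you correctly diagnose for the remainder $G_q$ --- the linear term is not actually easier, and your two proposed fixes (further paraproduct expansion, Schur-type tail bound) are needed there as well. The standard repair (Meyer) is to absorb the remainder into the multiplier, writing $F(\rho)-F(\rho_{<0})=\sum_q m_q\rho_q$ with $m_q=\int_0^1 F'(\rho_{<q}+t\rho_q)\,dt$, and to prove the symbol bound $\|\n^N m_q\|_\infty\le C_N\l_q^N$ via Fa\`a di Bruno and Bernstein; this yields $\|P_p(m_q\rho_q)\|_2\lesssim(\l_q/\l_p)^N\|\rho_q\|_2$ for $p>q$ and the sum closes. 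With that patch your route does deliver the Moser estimate.

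For comparison, the paper takes a different and more elementary route: it works directly with the Gagliardo--Sobolevskii seminorm of $H^{m+\a}$, writes $\d_z\rho^{-\t/n}=\d_z\rho\int_0^1(\l\rho(x+z)+(1-\l)\rho(x))^{-1-\t/n}\dl$, distributes $\p^m$ by Leibniz, expands the inner power via Fa\`a di Bruno, and bounds the resulting products $|\p^l\d_z\rho|\prod_i|\p^{k_i}\rho|$ through H\"older and the Gagliardo--Nirenberg inequality \eqref{e:GN}. This stays entirely within the physical-space toolkit already developed for the main proof and requires no Littlewood--Paley machinery; it also shows the non-leading terms are genuinely subcritical (bounded by $\|\rho\|_{H^{m+\a}}^\g$ with $\g<1$). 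Your approach, once patched, is the more off-the-shelf one; the paper's is more self-contained within its own framework.
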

\begin{proof}
Without loss of generality we can assume that $\a<1$, for otherwise we simply replace $m$ by $m+1$ and $\a$ by $\a-1$. Forming the finite difference we have
\[
\d_z \p^{m} \rho^{-\t/n}(x) =  \p^{m} (\d_z\rho^{-\t/n}(x)).
\]
Using that 
\[
\d_z\rho^{-\t/n}(x) = \d_z \rho(x) \int_0^1(\l \rho(x+z)+(1-\l) \rho(x))^{-1-\t/n} \dl,
\]
we distribute the $m$ derivatives to obtain terms
\[
\p^l \d_z \rho(x) \int_0^1 \p^{m-l}(\l \rho(x+z)+(1-\l) \rho(x))^{-1-\t/n} \dl.
\]
Using the Faa di Bruno expansion in the latter, we obtain terms that are bounded by
\[
|\p^l \d_z \rho(x)| \prod_{i=1}^{|\bj|} | \p^{k_i} \rho(x+\xi_i)|,
\]
where $\xi = 0$ or $\xi=z$, and $k_1+\dots+k_{|\bj|} = m-l$. So, the norm $\|\rho^{-\t/n} \|^2_{H^{m+\a}}$  is bounded by the terms 
\[
\int_{\T^{2n}} |\p^l \d_z \rho(x)|^2 \prod_{i=1}^{|\bj|} | \p^{k_i} \rho(x+\xi_i)|^2 \frac{\dz}{|z|^{n+2\a}}.
\]
If $|\bj|=0$, then $l=m$, and this gives the classical Sobolev norm of $H^{m+\a}$. Otherwise, if $|\bj|\geq 1$, applying the H\"older inequality, we obtain
\[
\leq \| \rho\|_{W^{l+\a+\d, q}} \prod_{i=1}^{|\bj|} \|\rho\|_{W^{k_i,p_i}},
\]
where
\[
q = \frac{m}{l}, \qquad p_i = \frac{m}{k_i}.
\]
And by the Gagliardo-Nirenberg inequality \eqref{e:GN}, this is bounded by $\|\rho\|_{H^{m+\a}}^\g$ with $\g<1$. This finishes the proof.
\end{proof}

\section{The continuation criterion}
In this section we prove \thm{t:criterion}. Instead of working with the momentum-mass system directly, we replace the density with the $e$-quantity given by 
\[
e=\nabla \cdot u + \mathcal{L}_{\phi}\rho.
\]
This strategy is prompted by the fact that placing $m+\a$ derivatives directly on the continuity equation creates a derivative overload on the velocity, which comes with the order of ${m+\a+1}$, higher than even the order of dissipation. The equation for $e$, instead, is capable to handle this issue due to cancelation of the highest order  terms. Since the natural order of regularity of $e$ is $m$ we thus define the grand quantity
\begin{equation*}\label{e:Ym}
Y_m = \| u\|_{\dH^{m+1}}^2 + \|e\|_{\dH^m}^2.
\end{equation*}

First, let us note that directly from the continuity equation we have a control over the lower and upper bounds on the density:
\[
\rmin = \min \rho, \qquad \rmax = \max \rho.
\]
Differentiating at the maximum and the minimum points we obtain
\begin{equation*}\label{}
\begin{split}
\ddt \rmax & \leq \|\n u\|_\infty \rmax, \\
\ddt \rmin^{-1} & \leq \|\n u\|_\infty \rmin^{-1}.
\end{split}
\end{equation*}
By virtue of the assumption \eqref{e:criterion} these two are uniformly bounded on the interval $[0,T)$. Consequently, the coercivity bounds of \prop{p:coerc} imply that 
\[
Y_m \sim \| u\|_{\dH^{m+1}}^2 +   \|\rho\|_{H^{m+\a}}^2.
\]
Thus controlling $Y_m$ we control the solution in the needed class.

The theorem will follow by the Gr\"onwall inequality if we establish 
\begin{equation*}
\ddt Y_m \leq C(\rmin,\rmax, \|\n \rho\|_{\infty},\|\n u\|_{\infty} )  Y_m,
\end{equation*}
on the interval of regularity $[0,T)$. This will be the main goal of the next two sections.

\subsection{Estimates on the velocity equation}\label{s:u}
The goal of this section is to establish the bound
\begin{align*}
\ddt\|u\|_{\dot{H}^m}^2\leq C Y_m - c_0 \| u\|^2_{H^{m+1 + \frac{\a}{2}}},
\end{align*}
for  $c_0>0$, where $C$ depends on all the norms we already control uniformly on $[0,T)$. 
Let us rewrite the velocity equation as
\[
\begin{split}
u_t + u \cdot \n u & = \cC_\phi(u,\rho), \\
\cC_\phi(u,\rho)(x) & = \int_{\T^n} \phi(x,x+z) \d_z u(x)  \rho(x+z) \dz = \cL_\phi(u\rho) - u \cL_\phi \rho.
\end{split}
\]
Let us apply $\p^{m+1}$ and test with $\p^{m+1} u$. We have 
\[
\p_t \| u\|_{\dH^{m+1}}^2  = - \int_{\T^n} \p^{m+1} (u \cdot \n u) \cdot \p^{m+1}  u \dx + \int_{\T^n}  \p^{m+1} \cC_\phi(u,\rho)\cdot \p^{m+1}  u \dx.
\]
The transport term is estimated using the classical commutator estimate
\[
\p^{m+1} (u \cdot \n u) \cdot \p^{m+1}  u = u \cdot \n  (\p^{m+1} u) \cdot \p^{m+1}  u + [\p^{m+1} , u] \n u \cdot \p^{m+1}  u .
\]
Then
\[
\int_{\T^n} u \cdot \n  (\p^{m+1} u) \cdot \p^{m+1}  u \dx = - \frac12 \int_{\T^n} (\n \cdot u) | \p^{m+1}  u|^2 \dx \leq \|\n u\|_\infty \| u\|_{\dH^{m+1}}^2,
\]
and using  \eqref{e:classcomm} for $f = u$, $g = \n u$, we obtain
\[
\int_{\T^n} |[\p^{m+1} , u] \n u \cdot \p^{m+1}  u  | \dx \leq  \|\n u\|_\infty \| u\|_{\dH^{m+1}}^2.
\]
Thus,
\[
\p_t \| u\|_{\dH^{m+1}}^2  \lesssim Y_m +  \int_{\T^n} \p^{m+1} \cC_\phi(u,\rho)\cdot \p^{m+1}  u \dx.
\]

In the rest of the argument we focus on estimating the commutator term.  So, we expand by the product rule
\begin{equation*}
	\p^{m+1} \cC_\phi(u,\rho) = \sum_{k=k_1+k_2 = 0}^{m+1} \frac{(m+1)!}{k_1!k_2!(m+1 - k)!}\cC_{\p^{m+1-k} \phi}(\p^{k_1} u, \p^{k_2} \rho).
\end{equation*}
Various terms in this expansion will be estimated differently. One special end-point case provides the necessary dissipation.

\subsubsection{Case $k_1 = m+1$} We symmetrize to obtain
\begin{equation*}\label{e:com+1}
\begin{split}
\int_{\T^n} \cC_\phi(\p^{m+1} u,\rho) \cdot \p^{m+1} u \dx & = -\frac12 \int_{\T^{2n}} \rho(x) |\d_z \p^{m+1} u(x)|^2 \phi(x,x+z) \dz \dx \\
&+  \frac12 \int_{\T^{2n}} \d_z \rho(x) \d_z \p^{m+1} u(x) \p^{m+1} u(x) \phi(x,x+z) \dz \dx\\
& \leq  - c  \| u\|_{\dH^{m+1+\frac{\a}{2}}}^2 + \int_{\T^n} |\p^{m+1} u(x)| \int_{\T^n} |\d_z \p^{m+1} u(x)| \frac{h(z)}{|z|^{n+\a -1}} \dz \dx.
\end{split}
\end{equation*}
In the last term the inner integral falls under the scope of \lem{l:subcrit2} with $d=m+1$, $N=0$. So, it applies together with the generalized Young inequality to yield the bound by $C Y_m + \e \| u\|_{\dH^{m+1+\frac{\a}{2}}}^2$. 

\subsubsection{Case $k_2 = m+1$} The other extreme case is when all derivatives fall on the density in the numerator. This causes a derivative overload on $\rho$ at least when $\a<1$.  We therefore apply the following relaxation argument:
\[
\int_{\T^n} \cC_{\phi} ( u, \p^{m+1} \rho) \cdot \p^{m+1} u  \dx  = \int_{\T^{2n}} \phi(x,x+z) \d_z u(x) \p^{m+1} \rho(x+z) \p^{m+1} u(x) \dz \dx.
\]
Observe that 
\[
\p^{m+1} \rho(x+z)  = \p_z \p_x^{m} \rho(x+z)  = \p_z ( \p_x^m \rho(x+z)  - \p_x^m \rho(x)) = \p_z \d_z \p^m \rho(x).
\]
Let us integrate by parts in $z$:
\begin{multline*}
\int_{\T^n} \cC_{\phi} ( u, \p^{m+1} \rho) \cdot \p^{m+1} u  \dx	= \int_{\T^{2n}}  \phi(x,x+z) \p u(x+z)   \d_z \p^m \rho(x) \p^{m+1} u(x) \dz \dx \\+  \int_{\T^{2n}} \p_z \phi(x,x+z) \d_z u(x)  \d_z \p^m \rho(x) \p^{m+1} u(x) \dz \dx 
	 := J_1+ J_2.
\end{multline*}
Symmetrizing in $J_1$ we further split
\begin{multline*}
J_1 =  \int_{\T^{n}} \left(\int_{\T^n} \d_z \p u(x)   \d_z \p^m \rho(x) \phi \dz\right) \p^{m+1} u(x) \dx - \int_{\T^{n}} \left(\int_{\T^n}  \d_z \p^m \rho(x) \d_z \p^{m+1} u(x) \phi \dz \right)  \p u(x)  \dx\\
: = J_{11} + J_{12}.
\end{multline*}
The inner integral of $J_{11}$ falls under the scope of \lem{l:subcrit2} with $d=1$, $N=1$. We thus bound it by $C Y_m + \e \| u\|_{\dH^{m+1+\frac{\a}{2}}}^2$ as we did earlier. As for $J_{12}$ the $\p u(x)$ is simply bounded a priori, so we obtain
\begin{equation}\label{e:J12}
J_{12} \lesssim \int_{\T^{2n}} \frac{|  \d_z \p^m \rho(x)|}{|z|^{ \frac{n}{2} + \frac{\a}{2}}} \frac{| \d_z \p^{m+1} u(x)|}{ |z|^{ \frac{n}{2} + \frac{\a}{2}}} \dz  \dx \leq \|\rho\|_{H^{m+ \frac{\a}{2}}} \|u\|_{H^{m+1+\frac{\a}{2}}} \leq C Y_m + \e \| u\|_{\dH^{m+1+\frac{\a}{2}}}^2.
\end{equation}

As to $J_2$, let us first observe that $\p_z \phi(x,x+z)= \psi(x,x+z) $ is antisymmetric, $\psi(x,y) = -\psi(y,x)$. Consequently, performing symmetrization we obtain
\[
J_2 = \frac12 \int_{\T^{2n}} \p_z \phi(x,x+z) \d_z u(x)  \d_z \p^m \rho(x) \d_z \p^{m+1} u(x) \dz \dx.
\]
Since  
\[
 \p_z \phi(x,x+z) = - \frac{ (n+\a-\t) h(z) z_i}{|z|^{n+\a+2 - \t} d^\t(x,x+z)} + h(z) \frac{\p_z \int_{\O(x,x+z)} \rho(\xi) \dxi}{|z|^{n+\a-\t} d^{\t + n}(x,x+z)} +  \frac{\p_z h(z)}{|z|^{n+\a- \t} d^\t(x,x+z)},
\]
and noticing that
\[
\left| \p_z \int_{\O(x,x+z)} \rho(\xi) \dxi \right| \leq \|\rho\|_\infty |z|^{n-1},
\]
we can see that 
\[
|\p_z \phi(x,x+z)| \lesssim \frac{\one_{|z|<2r_0}}{|z|^{n+\a+1}}.
\]
The one derivative loss is compensated by $|\d_z u(x)|\leq |z| \|\n u\|_{\infty}.$ With this at hand we estimate 
\[
J_2 \lesssim  \int_{\T^{2n}} \frac{|\d_z \p^m \rho(x) \d_z \p^{m+1} u(x)|}{|z|^{n+\a}} \dz \dx ,
\]
and we are back to \eqref{e:J12}.

\subsubsection{All other cases}
The bulk of the other terms can be estimated simultaneously.  We start by the standard symmetrization:
\[
\begin{split}
\int_{\T^n} \cC_{\p^{m+1-k}  \phi} ( \p^{k_1} u, \p^{k_2} \rho) \cdot \p^{m+1} u  \dx & = \frac12 \int_{\T^{2n}} \d_z \p^{k_1} u (x) \d_z \p^{k_2} \rho (x) \p^{m+1} u(x) \p^{m+1-k}  \phi(x,x+z) \dz \dx\\
& + \frac12 \int_{\T^{2n}} \d_z \p^{k_1} u (x) \p^{k_2} \rho (x) \d_z \p^{m+1} u(x) \p^{m+1-k}  \phi(x,x+z) \dz \dx\\
& = J_1 + J_2.
\end{split}
\]

The term $J_1$ is subcritical. To see that we use the
 Faa di Bruno expansion \eqref{e:FdB} for the kernel $ \p^{m+1-k}  \phi(x,x+z) $. Each term takes the form
\[
\int_{\T^{2n}}  \d_z \p^{k_1} u (x) \d_z\p^{k_2} \rho (x) \p^{m+1} u(x) \frac{h(z)}{|z|^{n+\a-\t}} \frac{\prod_{i=1}^{|\bj|}\int_{\Omega(x,x+z)}\partial^{l_i} \rho(\xi)d\xi}{d^{\tau+|\bj| n}(x,x+z)}\dz\dx,
\]
where $l_1+\dots+l_{|\bj|} = m+1-k$. Integrating by parts in each of the topological domains and reducing to the basic $\p \O_0$ we obtain
\[
\int_{\Omega(x,x+z)}\partial^l \rho(\xi)\dxi =|z|^{n-1} \int_{\p\O_0}\d_{\xi}\p^{l-1} \rho(x)\dth,
\]
where $\xi = \xi(\th, z)$, $|\xi|\leq |z|$. Moving all the communication integration outside, we obtain a family of terms
\[
\int_{\T^n} \p^{m+1} u(x)  \int_{\T^n} \d_z \p^{k_1} u (x) \d_z\p^{k_2} \rho (x) \prod_{i=1}^{|\bj|} \d_{\xi_i}\partial^{l_i-1} \rho(x) \frac{h(z)}{|z|^{n+\a-\t}} \frac{|z|^{(n-1)|\bj|}}{d^{\tau+|\bj| n}(x,x+z)}\dz\dx.
\]
Here the interior integral is bounded by
\[
I(x) = \int_{\T^n} |\d_z \p^{k_1} u (x) \d_z\p^{k_2} \rho (x)| \prod_{i=1}^{|\bj|} |\d_{\xi_i}\partial^{l_i-1} \rho(x)| \frac{h(z)}{|z|^{n+\a+|\bj|}} \dz.
\]
The total order of derivatives here is $m+1 - |\bj|$. So, if $|\bj|\geq 1$, then this term falls under \lem{l:subcrit} to produce the necessary estimates in the same fashion  as previously.  If $|\bj| = 0$, this corresponds to the situation when no derivatives fall on the kernel, and we have $k_1+k_2 = m+1$. Then  we are dealing with the terms of type
\[
II(x) = \int_{\T^n} |\d_z \p^{k_1} u (x) \d_z\p^{k_2} \rho (x)| \frac{h(z)}{|z|^{n+\a}} \dz.
\]
Since $k_2 < m+1$, $k_1\geq 1$, these fall under \lem{l:subcrit2} with $d = k_1$ and $N=1$ to conclude the estimate.

 Turning to $J_2$ we can see that the Faa di Bruno expansion of the kernel produces terms of type
 \[
\int_{\T^{2n}}  \d_z \p^{k_1} u (x) \p^{k_2} \rho (x) \d_z\p^{m+1} u(x)  \frac{\prod_{i=1}^{|\bj|}\int_{\Omega(x,x+z)}\partial^{l_i} \rho(\xi)d\xi}{d^{\tau+|\bj| n}(x,x+z)} \frac{h(z)}{|z|^{n+\a-\t}} \dz\dx,
\]
which, moving the communication integrals outside, are bounded by,
\[
\int_{\T^{2n}} | \d_z \p^{k_1} u (x) \p^{k_2} \rho (x) \d_z\p^{m+1} u(x)| \prod_{i=1}^{|\bj|}|\partial^{l_i} \rho(x+\xi_i)|\frac{h(z)}{|z|^{n+\a}} \dz\dx.
\]
Denoting
\[
p_i = \frac{2(m+1)}{l_i}, \qquad q_{1} = \frac{2(m+1)}{k_1}, \qquad q_{2} = \frac{2(m+1)}{k_2},
\]
and distributing the $\a$-exponent between the terms accordingly,
\[
\int_{\T^{2n}} \frac{| \d_z \p^{k_1} u (x)|}{ |z|^{\frac{n}{q_1}+\frac{\a}{2}+\d}}  \frac{|\p^{k_2} \rho (x)|}{ |z|^{\frac{n}{q_2} -\frac{\d}{|\bj|+1}}} \frac{| \d_z\p^{m+1} u(x)|}{ |z|^{\frac{n}{2}+\frac{\a}{2}}}  \prod_{i=1}^{|\bj|} \frac{|\partial^{l_i} \rho(x+\xi_i)|}{ |z|^{\frac{n}{p_i}- \frac{\d}{|\bj|+1}}} h(z)\dz\dx,
\]
we apply the H\"older inequality to obtain
\[
\leq \|u\|_{H^{m+1+\a/2}} \|u\|_{W^{k_1+\frac{\a}{2} + \d,q_1}} \| \rho\|_{W^{k_2,q_2}}  \prod_{i=1}^{|\bj|}  \| \rho\|_{W^{l_i,p_i}}.
\]
By the Gagliardo-Nirenberg inequality \eqref{e:GN} applied to each term except the first one we obtain
\[
\lesssim  \|u\|_{H^{m+1+\a/2}} \|u\|^{\g_1}_{H^{m+\a}} \| \rho\|^{\g_2}_{H^{m+\a}} \prod_{i=1}^{|\bj|}  \| \rho\|^{\b_i}_{H^{m+\a}},
\]
where 
\[
\g_1 = \frac{k_1 + \frac{\a}{2} - 1+\d}{m+\a-1},\quad \g_2 = \frac{k_2  - 1}{m+\a-1}, \quad \b_i = \frac{l_i - 1}{m+\a-1}.
\]
The sum of all these exponents is equal to $\frac{m+\frac{\a}{2} - 1 - |\bj|+\d}{m+\a-1} <1$ provided $\d$ is small enough.  Application of the generalized Young inequality, and noting that $\|u\|^{\g_1}_{H^{m+\a}} \leq \|u\|^{\g_1}_{H^{m+1+\frac{\a}{2}}}$, produces
\[
\lesssim C Y_m + \e \| u\|_{\dH^{m+1+\frac{\a}{2}}}^2.
\]

\subsection{Estimates on the $e$-equation}\label{s:e}

The goal of this section is to show the bound
\begin{align*}
\ddt\|e\|_{\dot{H}^m}^2\leq C Y_m + \e \| u\|^2_{H^{m+1 + \frac{\a}{2}}},
\end{align*}
for any $\e>0$, where $C$ depends on all the norms we already control uniformly on $[0,T)$, and on $\e$.  

Taking the divergence of the momentum equation and using the continuity equation we obtain the following equation on $e$,
\begin{equation}\label{e:etopo}
e_t+\nabla \cdot (ue)=(\nabla \cdot u)^2-\mathrm{Tr}(\nabla u)^2+\cT[\rho,u], 
\end{equation}
where 
\begin{equation*}\label{}
\cT[\rho,u] =  \partial_t(\mathcal{L}_{\phi}(\rho))+\nabla \cdot \mathcal{L}_{\phi}(\rho u).
\end{equation*}

Let us take a closer look the the topological term $\cT$ and work out a more explicit formula for it. We have
\[
\cT[\rho,u] = \mathcal{L}_{\phi}(\rho_t)+\mathcal{L}_{\phi_t}(\rho) + \mathcal{L}_{\phi}(\nabla \cdot (\rho u))+\mathcal{L}_{\nabla \phi \cdot}(\rho u).
\]
The first and third terms obviously cancel by the continuity equation. For the rest we have
\begin{align*}
\mathcal{L}_{\phi_t}(\rho)&=-\frac{\t}{n}\int_{\mathbb{T}^n}\frac{\int_{\O(x,x+z)}\rho_t(\xi) \dxi}{d^{\t+n}(x,x+z)}\d_z\rho(x) \frac{h(z)}{|z|^{n+\a-\t}} \dz\\
&=\frac{\t}{n}\int_{\mathbb{T}^n}\frac{\int_{\O(x,x+z)}\nabla \cdot (\rho u)(\xi) \dxi}{d^{\t+n}(x,x+z)}\d_z\rho(x) \frac{h(z)}{|z|^{n+\a-\t}} \dz,\\
\mathcal{L}_{\nabla\phi \cdot}(\rho u)&=\int_{\mathbb{T}^n}\nabla \phi(x,x+z) \cdot \d_z(\rho u)(x) \dz\\
&=-\frac{\tau}{n}\int_{\mathbb{T}^n}\frac{h(z)}{|z|^{n+\alpha-\tau}}\frac{\int_{\Omega(x,x+z)} \nabla \rho(\xi) \dxi}{d^{\tau+n}(x,x+z)}\cdot \d_z(\rho u)(x) \dz.
\end{align*}
We arrive at
\begin{equation*}\label{}
\cT[\rho,u]  = \frac{\t}{n}\int_{\T^n} \int_{\O(0,z)}[ \nabla \cdot (\rho u)(x+\xi)  \d_z\rho(x) - \n \rho(x+\xi) \cdot \d_z (\rho u)(x) ]   \frac{h(z)}{|z|^{n+\a-\t} d^{\t+n}(x,x+z)} \dxi \dz.
\end{equation*}

Let us now compute the energy equation for  the $H^m$-norm:
\[
\ddt \|e \|_{\dH^m}^2 = - \p^m e\, \p^m \left(u \cdot \n e +e \n\cdot u  \right) + \p^m e \, \p^m\left[(\nabla \cdot u)^2-\mathrm{Tr}(\nabla u)^2\right] + \p^m e \, \p^m \cT[\rho,u].
\]
Estimating the last term will be the main technical component of this section.  So, let us make a few quick comments as to the remaining terms.  The transport term becomes
\[
\p^m e \, (u \cdot \n \p^m e)  +  \p^m e\, \left[\p^m(u \cdot \n e) - u \cdot \n \p^m e\right]+ \p^m e \, \p^m(e \n \cdot u) .
\]
In the first term we integrate by parts and estimate
\[
| \p^m e u \cdot \n \p^m e| \leq \|e \|_{\dH^m}^2 \|\n u\|_\infty \lesssim Y_m.
\]
For the next term we use the  commutator estimate \eqref{e:classcomm} to obtain
\[
|\p^m e [ \p^m( u \cdot \n e) - u \cdot \n \p^m e]| \leq \|\n u\|_\infty \|e \|^2_{\dH^m}  +\|e \|_{\dH^m} \| u  \|_{\dH^m} \|\n e\|_\infty.
\]
Using the Gagliardo-Nirenberg inequality we estimate the latter term as
\[
\|e \|_{\dH^m} \| u  \|_{\dH^m} \|\n e \|_\infty \leq \|e \|_{\dH^m} \| u\|_{\dH^{m+1}}^{\th_1} \|\n u\|_\infty^{1-\th_1} \| e\|_{\dH^m}^{\th_2} \|e\|_\infty^{1-\th_2},
\]
where $\th_1 = \frac{2(m-1)-n}{2m-n}$ and $\th_2 = \frac{2}{2m-n}$. The two exponents add up to $1$, so by the generalized Young inequality,
\[
\leq (\| e\|_{\dH^m}^2 + \| u\|_{\dH^{m+1}}^2) ( \|e\|_\infty +  \|\n u \|_\infty) \leq ( \|e\|_\infty +  \|\n u\|_\infty) Y_m.
\]
Next term in the $e$-equation is estimated by the product formula \eqref{e:prodHm}.
So, we have
\[
| \p^m e  \p^m(e \n \cdot u) | \leq \|e \|_{\dH^m}^2 \|\n u\|_\infty + \|e \|_{\dH^m} \|e\|_\infty  \| u\|_{\dH^{m+1}} \leq ( \|e\|_\infty +  \|\n u\|_\infty) Y_m.
\]
Finally,
\[
| \p^m e [(\n \cdot u)^2 -  \Tr(\n u)^2] | \leq \|e \|_{\dH^m} \|u\|_{\dH^{m+1}}  \|\n u\|_\infty \leq  \|\n u\|_\infty Y_m.
\]
Thus, 
\begin{equation*}
\ddt \|e \|_{\dH^m}^2 \leq ( \|e\|_\infty +  \|\n u\|_\infty) Y_m + \p^m e \, \p^m \cT[\rho,u].
\end{equation*}

Let us address the issues related with the norm $\|e\|_\infty$. For the range $0<\a<1$ the norm is uniformly bounded by a straightforward application of the representation $e=\n\cdot u -\cL_{\phi}(\rho)$,
\[
\|e\|_{\infty}\lesssim \|\n u\|_{\infty} +\|\L^{\a} \rho\|_{\infty} \leq \|\n u\|_{\infty}+\|\n \rho\|_{\infty}.
\]
So, in this case $\|e\|_{\infty}$ is a priori bounded.

To complete the full range $(1\leq \a <2)$ a more subtle estimate is required. Coming back to the transport terms, we just need to consider a more precise computation for the expression:
\begin{equation}\label{badtransportterms}
\|e \|_{\dH^m} \left( \| u  \|_{\dH^m} \|\n e\|_\infty + \|e\|_\infty  \| u\|_{\dH^{m+1}}\right).
\end{equation}
Now, to take advantage of dissipation, we apply the following Gagliardo-Nirenberg inequalities to the terms of the last factor:
\begin{equation*}
  \begin{split}
    \|u\|_{\dot{H}^m} &\lesssim \| u \|_{\dH^{m+1+ \frac{\a}{2}}}^{\frac{2(m-1)}{2m+\a}},\\
	\|u \|_{\dH^{m+1}} & \lesssim  \|u \|_{\dH^{m+1+ \frac{\a}{2}}}^{\frac{2m}{2m+\a}}, \\
  \end{split}
\quad\qquad \quad
  \begin{split}
    \|\n e\|_{\infty}&\leq \|e \|_{\dH^{m }}^{\th_1} \|\L^{-1}e\|_{\dW^{2-(\a+\e),\infty}}^{1-\th_1},\\
	\| e\|_{\infty} &\leq \|e \|_{\dH^{m}}^{\th_2} \|\L^{-1} e\|_{\dW^{1-\left(\frac{\a}{2}-\e\right),\infty}}^{1-\th_2},
  \end{split}
\end{equation*}
where
\[
\th_1=\frac{\a+\e}{m+1-(2-(\a+\e))-\frac{n}{2}}, \qquad \th_2=\frac{\frac{\a}{2}-\e}{m+\frac{\a}{2}-\e-\frac{n}{2}}.
\]
By generalized Young inequality, we further obtain that \eqref{badtransportterms} can be bounded by
\[
\leq \e \| u \|_{\dH^{m+1+ \frac{\a}{2}}}^{2} +C_\e \, p_N(|\n u|_{\infty}, \|\L^{-1} e\|_{W^{2-(\a+\e),\infty}} ) \|e\|_{\dH^{m}}^{\max\{(1+\th_1)q_1, (1+\th_2)q_2\}},
\]
with $q_1, q_2$ conjugate exponents of
\[
p_1=\frac{2m+\a}{m-1}, \qquad p_2=\frac{2m+\a}{m}.
\]
We obtain $(1+\th_1)q_1<2$ as long as $m > 1 +\left(\frac{2+\a}{2-\a}\right)\frac{n}{2}$  and $(1+\th_2)q_2<2$ as long as $m> \frac{n\a}{4\e}.$ 
In addition, as we need that $\frac{\a}{2}+\e<1$ we impose the smallness conditon $2\e<2-\a$, which give us the required bound for the exponent, i.e. $\max\{(1+\th_1)q_1, (1+\th_2)q_2\}<2$ if
\begin{equation*}\label{regularity}
m > 1 +\left(\frac{2+\a}{2-\a}\right)\frac{n}{2}.
\end{equation*}
Using again the definition of the $e$-quantity we have 
$$\|\L^{-1} e\|_{W^{2-(\a+\e),\infty}}=\|\L^{-1} \n u\|_{W^{2-(\a+\e),\infty}}+ \|\L^{-1} \cL_{\phi}\rho\|_{W^{2-(\a+\e),\infty}},$$
where the first term is trivially bounded by $\|\n u \|_{\infty}$ thanks to the fact that $2-(\a+\e)<1$. For the last one we need to work a litle bit. 

To finish with the transport terms we apply  the next result.
\begin{lemma} For $1\leq \a<2$ and $\e>0$ such that $\a+\e<2$ the following bound holds:
\[
\|\L^{-1} \cL_{\phi}\rho\|_{W^{2-(\a+\e),\infty}} \lesssim \| \n \rho\|_{\infty},
\]
where $\lesssim$ means up to a factor of $\rmin, \rmax.$
\end{lemma}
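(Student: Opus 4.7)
My plan is to reduce the estimate to classical mapping properties of $\L^{-1}\cL_\phi\simeq\L^{\a-1}$ in Hölder--Besov spaces via the same freezing decomposition used in Proposition~\ref{p:coerc}. I would write
\[
\cL_\phi\rho=\rho^{-\t/n}\L_\a\rho+\cR\rho,\qquad \cR\rho(x)=\int_{\T^n}\d_z\rho(x)\,R_z\frac{h(z)}{|z|^{n+\a}}\dz,
\]
with $|R_z|\lesssim_{\rmin,\rmax}|z|\|\n\rho\|_\infty$, as shown in the proof of that proposition. Combined with $|\d_z\rho(x)|\leq|z|\|\n\rho\|_\infty$, this produces an integrand of order $|z|^{2-n-\a}\|\n\rho\|_\infty^2$, integrable near the origin since $\a<2$, so $\|\cR\rho\|_\infty\lesssim\|\n\rho\|_\infty^2$. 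Because the target exponent $s:=2-\a-\e$ lies in $(0,1)$ and $\L^{-1}$ gains one derivative on the Littlewood--Paley scale (by Bernstein, $\|\L^{-1}f_q\|_\infty\lesssim\l_q^{-1}\|f\|_\infty$), I would conclude $\|\L^{-1}\cR\rho\|_{W^{s,\infty}}\lesssim\|\cR\rho\|_\infty\lesssim\|\n\rho\|_\infty^2$, which is consistent with the claimed bound.

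For the leading term, I would apply the commutator split
\[
\L^{-1}(\rho^{-\t/n}\L_\a\rho)=\rho^{-\t/n}\,\L^{-1}\L_\a\rho+[\L^{-1},\rho^{-\t/n}]\L_\a\rho.
\]
Since the cutoff $h$ is smooth and compactly supported, $\L^{-1}\L_\a$ differs from $\L^{\a-1}$ by a bounded convolution, and the standard Littlewood--Paley estimate $\|\L^{\a-1}\rho\|_{W^{s,\infty}}\lesssim\|\rho\|_{W^{1,\infty}}$ holds because the cumulative order $(\a-1)+s=1-\e$ is strictly less than $1$. The Lipschitz factor $\rho^{-\t/n}$, whose norm in $W^{1,\infty}$ is controlled by $\rmin,\rmax$ and $\|\n\rho\|_\infty$, acts as a bounded multiplier on $W^{s,\infty}$ for $s<1$. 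For the commutator, a Bony paraproduct analysis gives the classical bound $\|[\L^{-1},f]g\|_{W^{s,\infty}}\lesssim\|\n f\|_\infty\|g\|_{W^{s-1,\infty}}$; applied with $f=\rho^{-\t/n}$ and $g=\L_\a\rho$, whose $W^{s-1,\infty}=W^{1-\a-\e,\infty}$ norm is in turn controlled by $\|\rho\|_{W^{1,\infty}}$, this produces the desired quantitative control.

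\textbf{Main obstacle.} The principal technical subtlety is that all of the mapping properties of $\L^{\pm 1}$ and $\L_\a$ invoked above are endpoint in $L^\infty$-based scales: sharp bounds in $W^{s,\infty}$ fail at integer $s$ and genuinely require Hölder--Zygmund or Besov substitutes. The $\e$-slack in the target exponent $2-\a-\e$ is precisely what absorbs the logarithmic losses that arise, so the entire argument can be carried out in the Besov scale $B^{s,\infty}_\infty$ (which coincides with $W^{s,\infty}$ for non-integer $s$) before descending to the stated inequality.
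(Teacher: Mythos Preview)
Your approach is sound and essentially different from the paper's. The paper does not work at the level of Besov/Littlewood--Paley machinery at all: it first reduces $\|\L^{-1}\cL_\phi\rho\|_{W^{2-(\a+\e),\infty}}$ to the two $L^\infty$ bounds $\|\L^{-1}\cL_\phi\rho\|_\infty$ and $\|\L^{1-(\a+\e)}\cL_\phi\rho\|_\infty$, writes the latter explicitly as a double integral against the Riesz potential kernel $|y|^{-(n-1+\a+\e)}$, \emph{then} freezes the topological coefficient inside that double integral, and finally integrates by parts in $z$ (using $|y-z|^{-(n+\a)}\sim \n_z\cdot(y-z)|y-z|^{-(n+\a)}$) to convert the density difference into $\n\rho$. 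So the paper's argument is a direct kernel estimate with no paraproduct or commutator input, whereas you freeze at the level of $\cL_\phi\rho$ itself and then rely on mapping and commutator properties of $\L^{-1}$, $\L^{\a-1}$ in the $B^{s}_{\infty,\infty}$ scale. Your route is cleaner conceptually and more systematic, at the cost of invoking the paradifferential commutator bound $\|[\L^{-1},f]g\|_{B^{s}_{\infty,\infty}}\lesssim\|\n f\|_\infty\|g\|_{B^{s-1}_{\infty,\infty}}$ for $s\in(0,1)$ and $s-1<0$, which is correct but not entirely off-the-shelf; the paper's approach is more elementary and self-contained.

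Two minor points. First, your remainder bound produces $\|\n\rho\|_\infty^2$ rather than $\|\n\rho\|_\infty$; the paper's proof has the same feature (the freezing residual contributes one factor and the density difference another), so the stated dependence ``up to $\rmin,\rmax$'' should really be read as polynomial in $\|\n\rho\|_\infty$ as well --- harmless for the application. Second, be mindful that in your commutator split $\L^{-1}$ is applied to $\rho^{-\t/n}\L_\a\rho$, which need not be mean-zero on $\T^n$; this only contributes an additive constant controlled by $\rmin,\rmax,\|\n\rho\|_\infty$, so it does not affect the $W^{s,\infty}$ seminorm, but it is worth a sentence.
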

\begin{proof}
The idea of the proof is just to use the smoother properties of the Riesz potential $\L^{-1}$ and $\L^{1-(\a+\e)}.$ In first place, applying  interpolation we have
\[
\|\L^{-1} \cL_{\phi}\rho\|_{W^{2-(\a+\e),\infty}}\leq \|\L^{-1} \cL_{\phi}\rho\|_{\infty} + \|\L^{1-(\a+\e)} \cL_{\phi}\rho\|_{\infty}.
\]
Since $-1<1-(\a+\e)<0$, we focus our attention on the last one. The other follows similar ideas.
By definition of Riesz potential  we have the expression
\[
\L^{1-(\a+\e)} \cL_{\phi}\rho(x)=\int_{\T^n}\frac{\cL_{\phi}\rho(x+y)}{|y|^{n-(1-(\a+\e))}}\dy=\int_{\T^{2n}}\frac{\rho(x+y)-\rho(x+z)}{|y|^{n-(1-(\a+\e))} |y-z|^{n+\a-\t}} \frac{h(|y-z|)}{d_{\rho}^{\tau}(x+y,x+z)}\dz\dy.
\]
Let us get rid of the dependence on $x$ in the topological kernel by freezing the coefficients as we did previously,
\begin{multline*}
\L^{1-(\a+\e)} \cL_{\phi}\rho(x)= \rho^{-\frac{\tau}{n}}(x)\int_{\T^{2n}}\frac{\rho(x+y)-\rho(x+z)}{|y|^{n-(1-(\a+\e))} |y-z|^{n+\a}} h(|y-z|)\dz\dy\\
+\int_{\T^{2n}}\frac{\rho(x+y)-\rho(x+z)}{|y|^{n-(1-(\a+\e))} |y-z|^{n+\a}} h(|y-z|)\times \left( \frac{1}{\left[\fint_{\O(y,z)} \rho(x+\xi) \dxi\right]^{\frac{\t}{n}}} - \frac{1}{\rho^{\frac{\t}{n}}(x)} \right) \dz\dy.
\end{multline*}
For sake of brevity, due to extra cancelation of the last factor which give us $\lesssim |y-z|\|\n \rho \|_{\infty}$, we focus only in the first term, which is the most singular one. To overcome this issue we apply  integration by parts and the fact that $|y-z|^{-(n+\a)}\approx \n_{z}\cdot (y-z) |y-z|^{-(n+\a)}$, which gives us
\begin{align*}
\int_{\T^{2n}}\frac{\rho(x+y)-\rho(x+z)}{|y|^{n-(1-(\a+\e))} |y-z|^{n+\a}} h(|y-z|)\dz\dy &\approx \int_{\T^{2n}}\frac{\n\rho(x+z)}{|y|^{n-(1-(\a+\e))} |y-z|^{n+\a-1}} h(|y-z|)\dz\dy\\
&\approx \|\n \rho\|_{\infty}\int_{\T^{2n}} \frac{1}{|z+w|^{n-(1-(\a+\e))} |w|^{n+\a-1}} \dw\dz.
\end{align*}
Integrating first in the variable $z$ and then in $w$, the last double integral is bounded by an universal constant and consequently we have proved our goal.
\end{proof}

In conclusion,  we have proved the inequality
\begin{equation*}
\ddt \|e \|_{\dH^m}^2 \leq ( \|\n u \|_\infty +  \|\n \rho\|_\infty) Y_m + \p^m e \, \p^m \cT[\rho,u].
\end{equation*}

We now focus solely on the topological term. First let us derive a form of $\cT$ that is most suitable to our analysis. Integrating by parts inside the communication integrals we obtain
\begin{multline*}
 \int_{\O(0,z)}[ \nabla \cdot (\rho u)(x+\xi)  \d_z\rho(x) - \n \rho(x+\xi) \cdot \d_z (\rho u)(x) ]  \dxi \\ =  \int_{\p \O(0,z)}[ (\rho u)(x+\xi)  \d_z\rho(x) - \rho(x+\xi)\d_z (\rho u)(x) ]  \cdot \nu_\xi \dxi,
\end{multline*}
 using cancelation of the integral of a constant,
\[
= \int_{\p \O(0,z)}[ \d_\xi(\rho u)(x)  \d_z\rho(x) - \d_\xi \rho(x)\d_z (\rho u)(x) ]  \cdot \nu_\xi \dxi,
\]
and adding and subtracting cross-difference terms,
\[
= \int_{\p \O(0,z)}[ \d_\xi \rho \d_z \rho \d_\xi u -\d_\xi \rho \d_z \rho \d_z u + \rho \d_z \rho \d_\xi u - \rho \d_\xi \rho \d_z u]  \cdot \nu_\xi \dxi.
\]
Using unitary transformation of $\O(0,z)$ to the basic domain we obtain the following representation (here $\xi = \xi(\th, z) = |z|U_z \th$)
\begin{equation*}\label{}
\begin{split}
\cT  & = \cT_1 -  \cT_2 + \cT_3 - \cT_4, \\
\cT_1 & = \frac{\t}{n}\int_{\p \O_0}\int_{\T^n} \d_\xi \rho \d_z \rho \d_\xi u \frac{h(z) U_z \nu_\th }{|z|^{1+\a-\t} d^{\t+n}(x,x+z)} \dz \dth,\\
\cT_2 & = \frac{\t}{n}\int_{\p \O_0}\int_{\T^n} \d_\xi \rho \d_z \rho \d_z u \frac{h(z) U_z \nu_\th }{|z|^{1+\a-\t} d^{\t+n}(x,x+z)} \dz \dth , \\
\cT_3 & = \frac{\t}{n}\int_{\p \O_0}\int_{\T^n} \rho \d_z \rho \d_\xi u \frac{h(z)U_z \nu_\th }{|z|^{1+\a-\t} d^{\t+n}(x,x+z)} \dz \dth , \\
\cT_4 & = \frac{\t}{n}\int_{\p \O_0}\int_{\T^n} \rho \d_\xi \rho \d_z u \frac{h(z)U_z \nu_\th }{|z|^{1+\a-\t} d^{\t+n}(x,x+z)} \dz \dth.
\end{split}
\end{equation*}
It will help us cut the estimates in half by simply noticing that the pairs $\cT_1, \cT_2$ and $\cT_3,\cT_4$ are completely similar, with the only difference in appearance of $z$ vs $\xi$ inside the finite differences. Since for any $\th$, $|\xi| \leq |z|$, and the bound \eqref{e:Sobxi1} asserts that we can replace $z$ with $\xi$ to get access to the same Sobolev norms uniformly in $\th$,  the analysis of these terms will be completely identical.  So, in what follows we only focus on $\cT_1$ and $\cT_3$.  

Looking further into the structure of $\cT_1$ and $\cT_3$ one observes that $\cT_1$ is distinctly easier in the sense that all the three state quantities appear in the form of finite differences, which allows to dissolve the singularity in $z$ among all three. Term $\cT_3$ on the other hand has only two quantities in finite difference form, while one density appears straight. For this reason the singularity presents itself relatively stronger. We will  handle this term with the help of paraproduct estimates.

\subsubsection{Estimates on $\cT_1, \cT_2$} 

Each term in the Leibnitz and Faa di Bruno expansion of $\p^m \cT_1[u,\rho]$ takes form
\[
\int_{\mathbb{T}^n}\p^l(\d_\xi \rho \d_z \rho \d_\xi u ) \prod_{i=1}^{|\bj|}\int_{\Omega(x,x+z)}\partial^{k_i} \rho(\xi)\dxi\frac{h(z)}{|z|^{1+\alpha-\tau} d^{\tau+n+ |\bj| n}(x,x+z)}\dz,
\]
where $k_1+\dots+k_{|\bj|} = m-l$. Integrating by parts in each of the topological domains and reducing to the basic $\p \O_0$ we further obtain
\[
\int_{\mathbb{T}^n}\p^l(\d_\xi \rho \d_z \rho \d_\xi u ) \prod_{i=1}^{|\bj|} \int_{\p \O_0}\d_{\xi(\th,z)}\partial^{k_i-1} \rho(x )\dth \frac{h(z)}{|z|^{1+\alpha-\tau - (n-1)|\bj|}d^{\tau+n+|\bj| n}(x,x+z)}\dz.
\]
Taking integration over communication domains outside we arrive at the term
\[
\int_{\mathbb{T}^n}\p^l(\d_\xi \rho \d_z \rho \d_\xi u )\prod_{i=1}^{|\bj|}\d_{\xi_i}\partial^{k_i-1} \rho(x) \frac{h(z)}{|z|^{1+\alpha-\tau - (n-1)|\bj|}d^{\tau+n+|\bj| n}(x,x+z)} \dz.
\]
Bounding the $d$-distance trivially we obtain the singular integral
\[
I = \int_{\mathbb{T}^n}| \p^l(\d_\xi \rho \d_z \rho \d_\xi u ) | \prod_{i=1}^{|\bj|} |\d_{\xi_i}\partial^{k_i-1} \rho(x)| \frac{h(z)}{|z|^{n+1+\alpha+|\bj|}(x,x+z)} \dz.
\]
Now, notice that the total order of derivatives in the numerator is $m - |\bj| \leq m$, the total number of terms on the top is $N = 3+ |\bj|$, and the order of singularity is $n+\a-2+N$. So, we are entirely under the scope of \lem{l:subcrit}, which gives the bound by
\[
\|I\|_2 \lesssim  \| \rho \|_{H^{m+\a}}^{\th_1} \| u\|_{H^{m+\a}}^{\th_2}, \qquad \th_1+\th_2 <1.
\]
Noticing that $m+\a < m+1+\frac{\a}{2}$, for any $\e>0$ by the generalized Young inequality,  we obtain
\begin{equation*}\label{}
\left| \int_{\T^n} \p^m e\, \p^m \cT_1[u,\rho] \dx \right | \leq C_\e Y_m + \e \| u\|_{H^{m+1 + \frac{\a}{2}}}^{2}.
\end{equation*}

Clearly, the estimate for $\cT_2$ is entirely similar.

\subsubsection{Estimates on $\cT_3, \cT_4$}
The two terms are similar, so let us focus on $\cT_3$. 
\begin{proposition}\label{}
For any $\e>0$ there is a constant $C_\e >0$ such that 
\begin{equation}\label{e:T3}
\left| \int_{\T^n} \p^m e\, \p^m \cT_3[u,\rho] \dx \right | \leq C_\e Y_m + \e \| u\|_{H^{m+1 + \frac{\a}{2}}}^{2}.
\end{equation}
\end{proposition}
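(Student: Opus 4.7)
My approach is to first expand $\p^m\cT_3$ in explicit form and then apply Bony paraproducts to the resulting singular quartic form. Unlike the pair $\cT_1,\cT_2$, where all three state quantities appear inside finite differences, here only two of the three factors ($\d_z\rho$ and $\d_\xi u$) are finite differences, so the structural lemmas \lem{l:subcrit}--\lem{l:subcrit2} fall short of the $|z|$-integrability threshold by exactly one power. Paraproducts are therefore needed throughout.

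\textbf{Expansion.} Applying Leibniz to $\rho\,\d_z\rho\,\d_\xi u$, the Faa di Bruno formula \eqref{e:FdB} to $d^{-(\t+n)}(x,x+z)$, and integrating by parts each communication integral down to the basic boundary $\p\O_0$ exactly as was done for $\cT_1$, $\p^m\cT_3$ becomes a finite linear combination, with bounded structural constants, of singular integrals of the form
\begin{equation*}
I(x)=\int_{\T^n}\p^{l_1}\rho(x)\,\d_z\p^{l_2}\rho(x)\,\d_\xi\p^{l_3}u(x)\prod_{i=1}^{|\bj|}\d_{\xi_i}\p^{k_i-1}\rho(x)\,\frac{h(z)}{|z|^{n+1+\a+|\bj|}}\,dz,
\end{equation*}
with $l_1+l_2+l_3+k_1+\cdots+k_{|\bj|}=m$. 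Testing against $\p^m e$ produces $N=2+|\bj|$ finite differences in the integrand against a kernel of order $n+\a-1+(2+|\bj|)$, which sits exactly one power above the threshold $n+\a-2+N$ of \lem{l:subcrit}; trading $\d_\xi u$ for its Lipschitz bound $|z|\|\n u\|_\infty$ lowers both the number of differences and the singularity by one, preserving the mismatch, and the analogous substitution fails similarly for \lem{l:subcrit2}.

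\textbf{Paraproduct resolution and main obstacle.} I would resolve this by decomposing each of the four factors $\p^m e$, $\p^{l_1}\rho$, $\d_z\p^{l_2}\rho$, $\d_\xi\p^{l_3}u$ into Littlewood--Paley pieces and reorganizing the fourfold dyadic sum via the product identities \eqref{e:LL}--\eqref{e:HH}. Using the Bernstein-type cancellation $|\d_z f_r|\lesssim\min(1,\l_r|z|)\|f_r\|_\infty$ (and its parallel for $\d_\xi$), the $z$-integration against $h(z)/|z|^{n+1+\a}$ produces dyadic weights that sum once paired with Bernstein on the high-frequency factors. The delicate block is the HH-LL configuration, where $\p^m e$ and $\p^m\rho$ (in the extremal case $l_1=m$) are localized at a common high frequency $\l_q$ while $\rho,u$ sit at much lower scales: in the range $\a\in[1,2)$ the bare Lipschitz cancellation of the low factors is insufficient to integrate the kernel and a residual shortfall of order $\l_q^{\a-1}$ persists. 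This is the main technical obstacle. Mirroring the CCFS scheme for the positive side of Onsager's conjecture, I would partition the $z$-integral at the scale $|z|\sim\l_q^{-1}$, use first-order cancellation $|\d_z\rho_r|\lesssim|z|\l_r\|\rho_r\|_\infty$ on the short-range regime and the pointwise bound $|\d_z\rho_r|\leq 2\|\rho_r\|_\infty$ on the long-range, and absorb the residual into a small fraction of the dissipation norm $\|u\|_{H^{m+1+\a/2}}^2$ via generalized Young. Summing in the dyadic index then closes the estimate \eqref{e:T3}. The bound for $\cT_4$ follows by the same argument with $\d_z$ and $\d_\xi$ interchanged.
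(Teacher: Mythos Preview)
Your overall plan is close in spirit to the paper's, but it omits the one structural step that makes the paraproduct argument close for the full range $0<\a<2$: the oddness symmetrization in $z$.

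The paper does not expand $\p^m\cT_3$ by Leibniz at all. Instead it first freezes the topological coefficient,
\[
\cT_3=\rho^{-\t/n}(x)\int_{\T^n}\d_z\rho\,\d_\xi u\,\frac{h(z)U_z\nu_\th}{|z|^{n+1+\a}}\,dz\;+\;\cT_{32},
\]
and disposes of $\cT_{32}$ as subcritical via \lem{l:subcrit} (the residual $R_z$ contributes one extra finite difference). Then, crucially, it uses that $z\mapsto U_z\nu_\th$ is odd to symmetrize the main part, which replaces $\d_z\rho\,\d_\xi u$ by combinations containing a \emph{second} difference $\d_z^2\rho$ or $\d_\xi^2 u$. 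Only after this does it run the quartic paraproduct (Lemma 3.5), moving $\p^m$ back onto $e$ rather than distributing it.

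This second difference is what your plan lacks, and without it your HHLL block actually fails for $\a\ge1$. In the extremal case $l_1=m$, $l_2=l_3=0$, the two high factors are $(\p^m e)_q$ and $(\p^m\rho)_{\sim q}$ while $\d_z\rho_{<q}$ and $\d_\xi u_{<q}$ sit low. With only first-order cancellation $|\d_z\rho_{<q}|\,|\d_\xi u_{<q}|\lesssim|z|^2\|\n\rho\|_\infty\|\n u\|_\infty$, the short-range integral $\int_{|z|<\l_q^{-1}}|z|^{1-n-\a}\,dz$ diverges at the origin for $\a\ge1$; there is no ``residual shortfall of order $\l_q^{\a-1}$'' to absorb, the bound is simply infinite. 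Moreover, your proposed escape hatch---absorbing into $\|u\|_{H^{m+1+\a/2}}^2$ via generalized Young---cannot work in this configuration, because neither high-frequency factor involves $u$: both are $e$ and $\rho$. The second difference $\d^2_z$, on the other hand, gives $|z|^2$ from a single factor, leaving the other difference to supply one more $|z|$, and then $\int_{|z|<\l_q^{-1}}|z|^{2-n-\a}\,dz\sim\l_q^{\a-2}$ is finite for all $\a<2$, which is exactly what the paper's HHLL computation uses.
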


Before we apply $\p^m$ on $\cT_3$, let us freeze the coefficients,
\begin{equation*}\label{}
\begin{split}
\cT_3 & = \cT_{31} + \cT_{32}, \\
\cT_{31}& = \rho^{-\t/n}(x) \int_{\T^n}  \d_z  \rho\, \d_\xi  u \frac{h(z) U_z \nu_\th }{|z|^{n+1+\a} }\dz, \\
 \cT_{32} & =  \int_{\T^n}  ( \rho\, \d_z  \rho\, \d_\xi  u) R_z \frac{h(z) U_z \nu_\th }{|z|^{n+1+\a}} \dz. \\
\end{split}
\end{equation*}

Let us first analyze $\p^m  \cT_{32}$ as this term will turn out to be subcritical and fall under the scope of \lem{l:subcrit}. 

\begin{lemma}\label{}
We have 
\[
\|\p^m  \cT_{32}\|_2 \leq \| \rho \|_{H^{m+\a}}^{\th_1} \| u\|_{H^{m+\a}}^{\th_2}, \qquad \th_1+\th_2 <1.
\]
\end{lemma}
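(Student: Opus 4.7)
The plan is to expand $\p^m \cT_{32}$ by Leibnitz, exploit the hidden finite-difference structure inside $R_z$, and reduce every resulting term to the setting of \lem{l:subcrit} with $N=3$ finite differences.

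First, I would distribute $\p^m$ by Leibnitz between the product $\rho\,\d_z\rho\,\d_\xi u$ and the factor $R_z$. For $R_z$ I plan to use the same mean-value representation exploited in the proof of \prop{p:coerc},
\[
R_z = \int_{\p \O_0} \d_\xi \rho(x)\dth \int_0^1 \left( \l \rho(x) + (1-\l)\fint_{\O_0}\rho(x+\xi)\dth \right)^{-1-\t/n}\dl,
\]
which already exposes one finite difference $\d_\xi \rho(x)$. Any derivatives landing on the smooth denominator are expanded via Faa di Bruno~\eqref{e:FdB}, producing a bounded factor in $\rho$ times products $\prod_i \p^{k_i}\rho(x+\xi_i)$ with $|\xi_i|\leq |z|$ and no additional finite differences. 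The originally exposed $\d_\xi \rho$ either survives untouched (when no derivatives hit it) or is promoted to $\d_\xi \p^{a}\rho(x)$ for some $a\geq 0$.

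After moving the $\th$- and $\l$-integrations outside and expanding $\p^{l}(\rho\,\d_z\rho\,\d_\xi u)$ by Leibnitz, a generic summand takes the form
\[
\int_{\T^n} |\p^{a}\rho(x)|\,|\d_z\p^{b}\rho(x)|\,|\d_\xi\p^{c} u(x)|\,|\d_{\xi'}\p^{d}\rho(x)|\prod_i |\p^{k_i}\rho(x+\xi_i)|\,\frac{h(z)}{|z|^{n+1+\a}}\dz,
\]
with $a+b+c+d+\sum_i k_i \leq m$. This integral carries exactly $N=3$ finite differences (one on $u$, two on $\rho$), while the total derivative count does not exceed $m$, and the singularity exponent $n+1+\a$ matches $n+\a-2+N$ precisely. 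Hence \lem{l:subcrit} applies verbatim and gives the desired bound $\|\rho\|_{H^{m+\a}}^{\th_1}\|u\|_{H^{m+\a}}^{\th_2}$ with $\th_1+\th_2<1$.

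The only point requiring care is to track that one finite difference coming from $\d_\xi \rho$ inside $R_z$ always survives the Leibnitz/Faa di Bruno splitting, so that together with the two visible finite differences in $\rho\,\d_z\rho\,\d_\xi u$ we always reach $N=3$. Everything else is index bookkeeping and a single appeal to \lem{l:subcrit}; this is precisely the reason $\cT_{32}$ is subcritical, whereas $\cT_{31}$ has only two finite differences for the same singularity $|z|^{-(n+1+\a)}$ and will require the quartic paraproduct estimates instead.
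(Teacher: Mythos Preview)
Your proposal is correct and follows essentially the same route as the paper: Leibnitz on $\p^m$, the mean-value representation of $R_z$ to expose the extra finite difference $\d_\xi\rho$, Faa di Bruno on the denominator, and then a direct appeal to \lem{l:subcrit} with $N=3$ (matching the singularity exponent $n+1+\a = n+\a-2+3$). The paper's proof is terser---it simply points back to the analogous computation in \prop{p:coerc}---but the mechanism you spell out is exactly what is being invoked.
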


\begin{proof} The Leibnitz expansion  consists of terms

\begin{equation*}\label{e:LexpR}
 \int_{\T^n} \p^l ( \rho\, \d_z  \rho\, \d_\xi  u) \p^{m-l} R_z \frac{h(z) U_z \nu_\th }{|z|^{n+1+\a}} \dz.
\end{equation*}
We can see that they are completely analogous to the terms we encountered in the proof of \prop{p:coerc}. The only difference is that  $N$ is smaller by one in this case. Hence, \lem{l:subcrit} applies to finish the proof.
\end{proof}

We now turn to the critical term $\cT_{31}$.

Because we have only two finite differences available, 
in order to get access to higher regularity of $\rho$ and $u$ at the first step we symmetrize in $z$ using oddness of the map $z \to U_z$: 
\begin{equation*}\label{}
\begin{split}
\cT_{31} & = \cT_{311} - \cT_{312},  \\
\cT_{311} & =  \rho^{-\t/n} \int_{\T^n}  \d^2_z  \rho\, \d_\xi  u \, \frac{h(z) U_z \nu_\th }{|z|^{n+1+\a} }\dz,\\
\cT_{312} & =  \rho^{-\t/n} \int_{\T^n}   \d_{-z}  \rho\,  \d^2_\xi  u \, \frac{h(z) U_z \nu_\th }{|z|^{n+1+\a} }\dz.
\end{split}
\end{equation*}
The two integrals are similar and have a common form 
\[
T =  \int_{\T^n}   g \ \d^2_\circ g' \, \d_\circ  g''\, \frac{h(z) U_z \nu_\th }{|z|^{n+1+\a} }\dz,
\]
where $\circ = z,-z$, or $\xi$.

\begin{lemma}\label{l:freqanal}
We have
\begin{equation*}\label{}
\int_{\T^n} f\, \p^m T  \dx  \leq  C\|f\|_2 (\|g\|_{H^{m+\a}}+ \|g'\|_{H^{m+\a}} +  \|g''\|_{H^{m+\a}}),
\end{equation*}
where $C$ depends only on the gradients of $g, g', g''$.
\end{lemma}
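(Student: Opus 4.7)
The plan is to expand $\p^m T$ via the Leibnitz rule and reduce the estimate to a family of quartic paraproducts in the four functions $f,g,g',g''$. Since $g(x)$ is independent of $z$, it factors outside the inner $z$-integral, so for each partition $l+k_1+k_2=m$ the expansion will produce a term of the form
\[
\p^l g(x)\int_{\T^n}\d^2_\circ\p^{k_1}g'(x)\,\d_\circ\p^{k_2}g''(x)\,\frac{h(z)U_z\nu_\th}{|z|^{n+1+\a}}\dz.
\]
Pairing with $f$ and integrating in $x$ yields a quartic functional on $(f,g,g',g'')$ that I will treat separately for each partition.

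For partitions in which the $|z|^3$ cancellation of the two finite differences leaves slack against the $|z|^{-(n+1+\a)}$ singularity --- roughly, when no single factor is pushed past the maximal exponent $m+\a-1$ --- I intend to pull $\|f\|_2$ out by Cauchy--Schwarz and invoke Lemma \ref{l:subcrit}. The spare $2-\a$ orders of regularity will be distributed as fractional Sobolev exponents on the remaining three functions, and the Gagliardo--Nirenberg inequality \eqref{e:GN} produces a product $\prod\|g^{(\cdot)}\|_{H^{m+\a}}^{\b_\cdot}$ with $\sum\b_\cdot<1$, which the generalized Young inequality absorbs into the stated linear bound.

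The genuinely critical case is when a crude $W^{1,\infty}$ estimate on $(g',g'')$ fails to render the inner integral finite once $\a\geq 1$. Here I will employ a full Bony quartic paraproduct decomposition, writing $f=\sum_q f_q$, $g=\sum_p g_p$, $g'=\sum_r g'_r$, $g''=\sum_s g''_s$ and using the identities \eqref{e:LL} and \eqref{e:HH} to break the inner pair of Littlewood--Paley blocks. The finite differences then act as frequency-localized Fourier multipliers obeying the Bernstein-type bounds
\[
|\d^2_z g'_r(x)|\lesssim\min\{\l_r^2|z|^2,1\}\|g'_r\|_\infty,\qquad |\d_z g''_s(x)|\lesssim\min\{\l_s|z|,1\}\|g''_s\|_\infty,
\]
so that integration against $h(z)|z|^{-(n+1+\a)}$ will transfer exactly $1+\a$ orders of smoothness into the dyadic weights $\l_r,\l_s$. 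The main obstacle I anticipate is the fully diagonal interaction with $p\sim q\sim r\sim s$, which mirrors the positive side of the Onsager conjecture argument of \cite{CCFS2008}: my plan is to place the full $\l_p^{2(m+\a)}$ weight onto one of $g,g',g''$ to produce its $H^{m+\a}$ norm, absorb the other two through the a priori Lipschitz control via $\|g^{(\cdot)}_r\|_\infty\lesssim\l_r^{-1}\|\n g^{(\cdot)}\|_\infty$, and close the resulting $\ell^2$-sum in the dyadic index by Cauchy--Schwarz against $\|f\|_2$.
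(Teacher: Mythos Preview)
Your paraproduct strategy is the heart of the matter and aligns with the paper, but the paper organizes the argument more economically. Rather than expanding $\p^m T$ by the Leibnitz rule, the paper integrates by parts once to throw all $m$ derivatives onto $f$, writing
\[
\int_{\T^n} f\,\p^m T\,\dx=\int_{\T^{2n}}(\p^m f)\,g\,\d^2_\circ g'\,\d_\circ g''\,\frac{h(z)U_z\nu_\th}{|z|^{n+1+\a}}\,\dz\,\dx,
\]
and then applies the Bony decomposition directly to the triple $(\p^m f,\,g,\,\d^2_\circ g'\,\d_\circ g'')$, splitting the last factor further via \eqref{e:LL}--\eqref{e:HH}. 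This produces eight interaction types (LMHH, MHLH, \ldots, LLHH), each handled uniformly by cutting the $z$-integral at the scale $1/\l$ of the highest active frequency and invoking precisely the Bernstein bounds you wrote down. There is no dichotomy between ``subcritical partitions'' treated by Lemma~\ref{l:subcrit} and a residual ``critical case'': the frequency machinery covers every piece at once, and no derivative ever lands on $g$, $g'$, or $g''$ explicitly.

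Your Leibnitz detour can in principle be made to work, but it carries two loose ends. First, Lemma~\ref{l:subcrit} is formulated only for products of first-order differences $\d_{\xi_i}\p^{k_i}f_i$; to accommodate the second-order factor $\d^2_\circ\p^{k_1}g'$ you must invoke the second-difference characterization of $W^{s,p}$ for $0<s<2$ so that this factor can absorb up to two units of $|z|$-singularity in the H\"older distribution --- routine, but not what the lemma as written provides. Second, the endpoint partitions $(l,k_1,k_2)=(m,0,0)$, $(0,m,0)$, $(0,0,m)$ are each genuinely critical when $\a\geq 1$ (a $W^{1,\infty}$ bound on the undifferentiated pair leaves a logarithmic divergence in $z$), so the ``critical case'' is not a single diagonal interaction but several endpoint terms. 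For these you propose decomposing the original $f,g,g',g''$ in Littlewood--Paley, which effectively abandons the Leibnitz split and reproduces the paper's calculation on those pieces anyway. Moving $\p^m$ onto $f$ at the outset is exactly the device that lets the paper avoid this case analysis.
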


Applying this lemma the terms above and replacing the $m+\a$ integrability for $u$ with $m+1+\tfrac{\a}{2}$ we obtain the necessary bound \eqref{e:T3} by invoking \lem{l:powerrho}.

\begin{proof}

Moving $\p^m$ back onto $f$ we will apply the Bony decomposition to the triple $(\p^m f,g, \d_\circ g' \d_\circ g'')$, treating  the product $\d_\circ g' \d_\circ g''$ as a single piece, then splitting it into further paraproducts using \eqref{e:LL}, \eqref{e:HH}. This results in a number of terms according to interacting low (L), medium (M), and  high (H) frequencies:
\begin{equation*}
\int_{\T^n} f  \p^m T \dx  = \int_{\T^n} (\mbox{paraproducts})\\
\times  \frac{h(z) U_z \nu_\th}{|z|^{n+1+\a}} \dz,
\end{equation*}
where ``paraproducts" consists of the following terms
\begin{equation*}\label{}
\begin{split}
LMHH & = \sum_q   \sum_{p>q-1} \sum_{r> p-2} \lan \p^mf_q, g_{\sim p}, \, (\d^2_\circ g'_{\sim r} \, \d_\circ  g''_{\sim r})_{\sim p} \ran,  \\
MHLH & = \sum_q   \sum_{p>q-1} \lan \p^m f_q, g_{\sim p}, \, (\d^2_\circ g'_{<p} \, \d_\circ  g''_{\sim p})_{\sim p} \ran,  \\
MHHL & = \sum_q   \sum_{p>q-1} \lan \p^m f_q,  g_{\sim p}, \, (\d^2_\circ g'_{\sim p} \, \d_\circ  g''_{< p})_{\sim p} \ran,  \\
MLHH & = \sum_q   \sum_{r> q-2}   \lan \p^m f_q, g_{<q}, \, (\d^2_\circ g'_{\sim r} \, \d_\circ  g''_{\sim r})_{\sim q} \ran,  \\
HLLH & = \sum_q    \lan \p^m f_q,  g_{<q}, \, (\d^2_\circ g'_{<q} \, \d_\circ  g''_{\sim q})_{\sim q} \ran,  \\
HLHL & = \sum_q   \sum_{r> q-2}   \lan \p^m f_q, g_{<q}, \, (\d^2_\circ g'_{\sim q} \, \d_\circ  g''_{<q})_{\sim q} \ran, \\
HHLL & = \sum_q    \lan \p^mf_q,  g_{\sim q}, \, (\d^2_\circ g'_{<q+2} \, \d_\circ  g''_{<q+2})_{<q} \ran,  \\
LLHH & = \sum_q   \sum_{r> q+2}  \lan \p^mf_q,  g_{\sim q}, \, (\d^2_\circ g'_{\sim r} \, \d_\circ  g''_{\sim r})_{<q} \ran .
\end{split}
\end{equation*}

The general strategy will be to split the $z$-integral into short range $|z|<1/\l$ and long range $|z|>1/\l$ where $\l$ is the highest frequency of the components at hand. In the short range we use all the available gradients of $g',g''$, which is only one. 

Let us start with the low-medium-high-high term,
\begin{equation*}\label{}
\begin{split}
 \int_{\T^n}| LMHH | \frac{h(z)}{|z|^{n+1+\a}} \dz & \leq  \sum_q   \sum_{p>q-1} \sum_{r> p-2} \int_{|z| < 1/\l_r}|\lan \p^mf_q,  g_{\sim p}, \, (\d^2_\circ g'_{\sim r} \, \d_\circ  g''_{\sim r})_{\sim p} \ran|\frac{h(z)}{|z|^{n+1+\a}} \dz\\
 & + \sum_q   \sum_{p>q-1} \sum_{r> p-2} \int_{|z| >1/\l_r}|\lan \p^mf_q,  g_{\sim p}, \, (\d^2_\circ g'_{\sim r} \, \d_\circ  g''_{\sim r})_{\sim p} \ran| \frac{h(z)}{|z|^{n+1+\a}} \dz\\
&\leq  \sum_q   \sum_{p>q-1} \sum_{r> p-2} \int_{|z| < 1/\l_r} \l_q^m \|f_q\|_2 \|g_{\sim p}\|_\infty \| \n^2 g'_{\sim r} \|_2 \| \n  g''_{\sim r}\|_\infty \frac{h(z)}{|z|^{n+\a-2}} \dz\\
 & + \sum_q   \sum_{p>q-1} \sum_{r> p-2} \int_{|z| >1/\l_r}  \l_q^m \|f_q\|_2 \| g_{\sim p}\|_\infty \|  g'_{\sim r} \|_2 \| \n  g''_{\sim r}\|_\infty \frac{h(z)}{|z|^{n+\a}} \dz.
\end{split}
\end{equation*}
The first integral results in the term 
\[
\| \n^2 g'_{\sim r} \|_2 \| \n  g''_{\sim r}\|_\infty \l_r^{\a-2} \lesssim \l_r^{-m}\| \n^{m+\a} g'_{\sim r} \|_2,
\]
and  the second results in a similar term,
\[
\|  g'_{\sim r} \|_2 \| \n  g''_{\sim r}\|_\infty \l_r^{\a} \lesssim \l_r^{-m}\| \n^{m+\a} g'_{\sim r} \|_2.
\]
We continue,
\begin{equation*}\label{}
 \lesssim  \|g'\|_{H^{m+\a}} \sum_q   \sum_{p>q-1}   \frac{\l_q^m}{\l_p^m} \|f_q\|_2  \| g_{\sim p}\|_\infty \lesssim  \|g'\|_{H^{m+\a}} \|f\|_2.
\end{equation*}
In all the remaining seven terms we proceed similarly with a few modifications. Next up is MHLH,
\begin{equation*}\label{}
\begin{split}
 \int_{\T^n}| MHLH | \frac{h(z)}{|z|^{n+1+\a}} \dz & \leq  \sum_q   \sum_{p>q-1} \int_{|z| < 1/\l_p} |\lan \p^m f_q,  g_{\sim p}, \, (\d^2_\circ g'_{<p} \, \d_\circ  g''_{\sim p})_{\sim p} \ran |  \frac{h(z)}{|z|^{n+1+\a}} \dz\\
   & +  \sum_q   \sum_{p>q-1} \int_{|z| > 1/\l_p} |\lan \p^m f_q,  g_{\sim p}, \, (\d^2_\circ g'_{<p} \, \d_\circ  g''_{\sim p})_{\sim p} \ran |  \frac{h(z)}{|z|^{n+1+\a}} \dz\\
 &\leq  \sum_q   \sum_{p>q-1}  \l_q^m \|f_q\|_2 \| g_{\sim p}\|_\infty \| \n^2 g'_{<p} \|_\infty \| \n  g''_{\sim p}\|_2 \l_p^{\a-2}\\
 & + \sum_q   \sum_{p>q-1}  \l_q^m \|f_q\|_2 \| g_{\sim p}\|_\infty \| \n g'_{<p} \|_\infty \|   g''_{\sim p}\|_2\l_p^{\a}\\
 &\leq  \sum_q   \sum_{p>q-1}  \l_q^m \|f_q\|_2 \l_p^{-1} \|\n g_{\sim p}\|_\infty \| \n g'_{<p} \|_\infty \| \n^{m+\a}  g''_{\sim p}\|_2 \l_p^{-m}\\
 &\lesssim  \|g''\|_{H^{m+\a}}  \sum_q   \l_q^m \|f_q\|_2  \l_q^{-m-1}\leq \|g''\|_{H^{m+\a}} \|f\|_2.
\end{split}
\end{equation*}

Next, in a similar manner,
\begin{equation*}\label{}
\begin{split}
 \int_{\T^n}| MHHL | \frac{h(z)}{|z|^{n+1+\a}} \dz 
 &\leq  \sum_q   \sum_{p>q-1}  \l_q^m \|f_q\|_2 \| g_{\sim p}\|_\infty \| \n^2 g'_{\sim p} \|_2 \| \n  g''_{< p}\|_\infty \l_p^{\a-2}\\
 & + \sum_q   \sum_{p>q-1}  \l_q^m \|f_q\|_2 \| g_{\sim p}\|_\infty \| g'_{\sim p} \|_2 \|  \n g''_{< p}\|_\infty \l_p^{\a}\\
 &\leq  \sum_q   \sum_{p>q-1}  \l_q^m \|f_q\|_2 \l_p^{-1} \|\n g_{\sim p}\|_\infty \| \n^{m+\a} g'_{\sim p} \|_2  \l_p^{-m} \lesssim  \|g'\|_{H^{m+\a}} \|f\|_2.
\end{split}
\end{equation*}

Next,

\begin{equation*}\label{}
\begin{split}
 \int_{\T^n}| MLHH | \frac{h(z)}{|z|^{n+1+\a}} \dz & \leq  \sum_q   \sum_{r> q-2}    \int_{|z| < 1/\l_r}  |\lan \p^m f_q,  g_{<q}, \, (\d^2_\circ g'_{\sim r} \, \d_\circ  g''_{\sim r})_{\sim q} \ran | \frac{h(z)}{|z|^{n+1+\a}} \dz\\
 & + \sum_q   \sum_{r> q-2}    \int_{|z| > 1/\l_r}  |\lan \p^m f_q, g_{<q}, \, (\d^2_\circ g'_{\sim r} \, \d_\circ  g''_{\sim r})_{\sim q} \ran | \frac{h(z)}{|z|^{n+1+\a}} \dz\\
 & \leq  \sum_q   \sum_{r> q-2}      \l_q^m \| f_q\|_2 \|g_{<q}\|_\infty \| \n^2 g'_{\sim r} \|_2 \| \n  g''_{\sim r}\|_\infty \l_r^{\a - 2} \\
 & +  \sum_q   \sum_{r> q-2}     \l_q^m \| f_q\|_2 \|g_{<q}\|_\infty \|  g'_{\sim r} \|_2 \| \n  g''_{\sim r}\|_\infty \l_r^{\a} \\
 &\lesssim \sum_q   \sum_{r> q-2}   \| f_q\|_2   \frac{  \l_q^m }{\l_r^{m}} \|  \n^{m+\a} g'_{\sim r} \|_2  \lesssim  \sum_r   \|  \n^{m+\a} g'_{\sim r} \|_2 \sum_{q< r+2}   \frac{  \l_q^m }{ \l_r^{m}}  \| f_q\|_2 \\
 &\lesssim  \|g'\|_{H^{m+\a}} \left( \sum_r   \left( \sum_{q< r+2}   \frac{  \l_q^m }{ \l_r^{m}}  \| f_q\|_2 \right)^2 \right)^{1/2} \\
 &\leq  \|g'\|_{H^{m+\a}} \left( \sum_r    \sum_{q< r+2}   \frac{  \l_q^m }{ \l_r^{m}}  \| f_q\|_2^2 \right)^{1/2} \leq \|g'\|_{H^{m+\a}} \|f\|_2.
 \end{split}
\end{equation*}

In the next HLLH and HLHL terms we split relative to the scale $1/\l_q$ to obtain

\begin{multline*}\label{}
\int_{\T^n}| HLLH | \frac{h(z)}{|z|^{n+1+\a}} \dz  \leq  \sum_q      \l_q^m \| f_q\|_2 \|g_{<q}\|_\infty \| \n^2 g'_{<q} \|_\infty \| \n  g''_{\sim q}\|_2 \l_q^{\a - 2} \\
 +  \sum_q     \l_q^m \| f_q\|_2 \|g_{<q}\|_\infty \|  \n g'_{<q} \|_\infty \|  g''_{\sim q}\|_2 \l_q^{\a} \lesssim  \sum_q    \| f_q\|_2   \| \n^{m+\a} g''_{\sim q}\|_2  \leq \|f\|_2 \|g''\|_{H^{m+\a}} .
\end{multline*}


\begin{multline*}\label{}
 \int_{\T^n}| HLHL | \frac{h(z)}{|z|^{n+1+\a}} \dz  \leq  \sum_q      \l_q^m \| f_q\|_2 \|g_{<q}\|_\infty \| \n^2 g'_{
 \sim q} \|_2 \| \n  g''_{< q}\|_\infty \l_q^{\a - 2} \\
  +  \sum_q     \l_q^m \| f_q\|_2 \|g_{<q}\|_\infty \| g'_{\sim q} \|_2 \|  \n g''_{< q}\|_\infty \l_q^{\a} \lesssim  \sum_q    \| f_q\|_2   \| \n^{m+\a} g'_{\sim q}\|_2  \leq \|f\|_2 \|g'\|_{H^{m+\a}} .
\end{multline*}

Next,
\begin{equation*}\label{}
\begin{split}
 \int_{\T^n}| HHLL | \frac{h(z)}{|z|^{n+1+\a}} \dz & \leq  \sum_q      \int_{|z| < 1/\l_q}  | \lan \p^mf_q,  g_{\sim q}, \, (\d^2_\circ g'_{<q+2} \, \d_\circ  g''_{<q+2})_{<q} \ran | \frac{h(z)}{|z|^{n+1+\a}} \dz\\
 & +  \sum_q      \int_{|z| > 1/\l_q}  | \lan \p^mf_q,  g_{\sim q}, \, (\d^2_\circ g'_{<q+2} \, \d_\circ  g''_{<q+2})_{<q} \ran | \frac{h(z)}{|z|^{n+1+\a}} \dz\\
 & \leq  \sum_q      \l_q^m \| f_q\|_2 \|g_{\sim q}\|_2 \| \n^2 g'_{<q+2} \|_\infty \| \n  g''_{<q+2}\|_\infty \l_q^{\a - 2} \\
 & +  \sum_q      \l_q^m \| f_q\|_2 \|g_{\sim q}\|_2 \|  g'_{<q+2} \|_\infty \| \n  g''_{<q+2}\|_\infty \l_q^{\a } \\
 & \lesssim  \sum_q    \| f_q\|_2 \|\n^{m+\a} g_{\sim q}\|_2  \leq  \|f\|_2 \|g\|_{H^{m+\a}}.
 \end{split}
\end{equation*}

Next,
\begin{equation*}\label{}
\begin{split}
 \int_{\T^n}| LLHH | \frac{h(z)}{|z|^{n+1+\a}} \dz & \leq  \sum_q    \sum_{r> q+2}   \int_{|z| < 1/\l_r} |\lan \p^mf_q,  g_{\sim q}, \, (\d^2_\circ g'_{\sim r} \, \d_\circ  g''_{\sim r})_{<q} \ran| \frac{h(z)}{|z|^{n+1+\a}} \dz\\
 & +  \sum_q    \sum_{r> q+2}   \int_{|z| > 1/\l_r} |\lan \p^mf_q,  g_{\sim q}, \, (\d^2_\circ g'_{\sim r} \, \d_\circ  g''_{\sim r})_{<q} \ran| \frac{h(z)}{|z|^{n+1+\a}} \dz\\
 & \leq  \sum_q    \sum_{r> q+2}    \l_q^m \| f_q\|_2 \|g_{\sim q}\|_\infty \| \n^2 g'_{\sim r} \|_2 \| \n  g''_{\sim r}\|_\infty \l_r^{\a - 2} \\
 & +  \sum_q    \sum_{r> q+2}    \l_q^m \| f_q\|_2 \|g_{\sim q}\|_\infty \|  g'_{\sim r} \|_2 \| \n  g''_{\sim r}\|_\infty \l_r^{\a } \\
 & \leq  \sum_q    \sum_{r> q+2}    \l_q^m \| f_q\|_2  \| \n^{m+\a} g'_{\sim r} \|_2 \l_r^{-m}  ,
  \end{split}
\end{equation*}
and we finish as in the MLHH case.

\end{proof}

\section{Global solutions}

\subsection{Parallel shear flocks}\label{s:psf}

One immediate application of the continuation criterion would be to parallel shear flocks. Up to rotation these are given by velocities independent of $x_1$,
\begin{equation}\label{e:PSF}
u = (U(x_2,\dots,x_n,t),0,\dots,0), \qquad \rho = \rho_0(x_2,\dots,x_n).
\end{equation}
 In this case the density is stationary $\p_t \rho = 0$, and so is the $e$-quantity. The momentum equation takes the form of  pure diffusion 
\begin{equation}\label{e:parabolic}
U_t(x) = \int_{\T^n} \phi(x,x+z) \rho_0(x+z) \d_z U(x) \dz.
\end{equation}
Taking partial derivative $\p$ with respect to any variable results in
\[
\p U_t =  \int_{\T^n} \phi(x,x+z) \rho_0(x+z) \d_z \p U(x) \dz +  \int_{\T^n} \p_x [\phi(x,x+z) \rho_0(x+z)] \d_z U(x) \dz.
\]
If $\rho_0 \in H^{m+\a}$ and $\rho_0(x) \geq \rmin >0$, then  $\p_x [\phi(x,x+z) \rho_0(x+z)] $ is still a kernel of singularity $n+\a$. Thus, if $\a<1$, the last integral is bounded by $C \| \n U\|_\infty$.  Evaluating the above at the point of maximum of $\p U$ and summing over all partials gives the inequality
\[
\ddt \| \n U\|_\infty \leq C \| \n U\|_\infty.
\]
It is therefore a priori bounded and the criterion applies.

To handle the range $1\leq \a<2$, we note that \eqref{e:parabolic} fall under a general class of fractional diffusion equations
\[
w_t(x) =\int_{\T^n} K(x,z,t)\d_z w(x) \dz.
\]
Typical regularity results for such equations requires either the assumption of evenness of $K$ in $z$ or symmetry in $x,y=x+z$. In our settings, the kernel is given by
\begin{equation*}\label{e:kernelKk}
K=\frac{h(z)}{|z|^{n+\a}} \k(x,x+z), \qquad \text{with} \qquad \k(x,x+z)=\frac{\rho_0(x+z)}{\left(\fint_{\O(x,x+z)}\rho_0(\xi)\dxi \right)^{\frac{\t}{n}}},
\end{equation*}
which satisfies neither of the above requirements. However, freezing the coefficients and representing the kernel as a sum of the main even and residual parts
where
\begin{equation*}\label{e:kernelF&G}
F(x,z)=\frac{h(z)}{|z|^{n+1}}\k(x,x), \qquad G(x,z)=\frac{h(z)}{|z|^{n+1}}\left(\k(x,x+z)-\k(x,x)\right),
\end{equation*}
fulfills the hypotheses of Theorem 1.1 of \cite{IJS2018} in the particular case $\a=1$, which provides Schauder estimates
\[
\|U\|_{C^{1+\g}(\T^n\times[T/2,T))}\lesssim \|U\|_{L^{\infty}(\T^n\times[0,T))}.
\]
Since the right hand side is uniformly bounded due to the maximum principle, this  fulfills the continuation criterion and
the proof  is complete for $\a = 1$.

For the subcritical case $\a>1$ we can adopt \cite[Theorem 8.1]{SS2016}. To get the model to satisfy its assumptions, we use a cut-off function $\chi(z)$ and small $\e>0$ to break the residual term $G$ further into inner singular part
\[
G_\e = \frac{h(z)\chi(z/\e)}{|z|^{n+\a}}\left[\frac{\rho_0(x+z)}{\left(\fint_{\O(0,z)}\rho_0(x+\xi)\dxi \right)^{\frac{\t}{n}}} - \frac{\rho_0(x)}{\rho_0(x)^{\frac{\t}{n}}}\right],
\]
and the outer regular part
\[
H_\e =  \frac{h(z)(1-\chi(z/\e))}{|z|^{n+\a}}\left[\frac{\rho_0(x+z)}{\left(\fint_{\O(0,z)}\rho_0(x+\xi)\dxi \right)^{\frac{\t}{n}}} - \frac{\rho_0(x)}{\rho_0(x)^{\frac{\t}{n}}}\right].
\]
The integral 
\[
f = \int H_\e(x,z) \d_z U(x)\dz,
\]
contributes with a bounded source to the equation, while the principal kernel
\[
K_\e = F + G_\e,
\]
for small $\e>0$ satisfies all the assumptions (A1)--(A4) of \cite{SS2016}. 

For $\a>1$ it is necessary to use the next Taylor term in the definition of the finite difference, so we add and subtract it in the singular integral to produce an extra drift:
\begin{equation}\label{e:m2}
U_t + b \cdot \n U = \int K_\e(x,z) [\d_z U(x) - z \cdot \n U(x) ] \dz + f,
\end{equation}
where
\[
b (x) = - \int_{\T^n}  G_\e(x,z) z \dz.
\]
The latter is no longer a singular integral and hence $b$ is bounded. This fulfills all the assumptions of \cite[Theorem 8.1]{SS2016} pertaining to the equation \eqref{e:m2}, and the $C^{1+\g}$-continuity of $u$ follows.  We have proved the following theorem.

\begin{theorem}\label{}
For any $U_0, \rho_0 \in H^{m+1} \times H^{m+\a}$, $0<\a<2$, there exists a unique global solution to the equation \eqref{e:main}--\eqref{e:kernel} in the form of a parallel shear flock \eqref{e:PSF} which belongs to the same class.
\end{theorem}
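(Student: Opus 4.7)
The plan is to reduce the full system to a scalar fractional parabolic equation for $U$, then to verify that the criterion in \thm{t:criterion} applies by providing uniform control on $\|\n U\|_\infty$ and $\|\n \rho\|_\infty$. First, I would plug the ansatz \eqref{e:PSF} into \eqref{e:main}. Since $U$ depends only on $(x_2,\dots,x_n,t)$, we have $\n \cdot u = \p_{x_1} U = 0$, so the continuity equation reduces to $\p_t \rho + u \cdot \n \rho = 0$, which is automatically satisfied because $u \cdot \n \rho_0 = U \p_{x_1}\rho_0 = 0$. Hence $\rho(x,t) \equiv \rho_0(x_2,\dots,x_n)$ is stationary, all nonlinear transport in the momentum equation vanishes (since $u \cdot \n u = U \p_{x_1} u = 0$), and the system collapses to the linear non-local diffusion equation \eqref{e:parabolic} for the scalar $U$. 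Local existence in the required Sobolev class is inherited from \thm{t:main}, and uniqueness follows from the maximum principle applied to $U$.

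Once local existence is granted, \thm{t:criterion} tells me that to continue the solution globally it suffices to bound $\|\n U\|_\infty + \|\n \rho_0\|_\infty$ uniformly on $[0,T)$. Since $\rho_0$ is time-independent and smooth, $\|\n \rho_0\|_\infty$ is already a constant. So the entire task reduces to a uniform Lipschitz bound for $U$. For the subrange $\a<1$, I would differentiate \eqref{e:parabolic} in any spatial direction, splitting the result into a dissipative piece acting on $\n U$ plus a kernel-derivative remainder whose singularity is still of order $n+\a<n+1$ and therefore is integrated against $\d_z U$ absolutely, producing a bound of size $C\|\n U\|_\infty$. Evaluating at the maximum of $\p U$, summing over directions, and invoking Gr\"onwall gives the desired a priori bound.

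The harder subrange is $1\leq \a <2$. Here I would write \eqref{e:parabolic} as $U_t = \int K(x,z)\d_z U \dz$ with $K(x,z)= h(z)|z|^{-n-\a}\k(x,x+z)$, where $\k(x,y)= \rho_0(y)/(\fint_{\O(x,y)}\rho_0)^{\t/n}$. This kernel is neither even in $z$ nor symmetric in $(x,y)$. To get access to known fractional Schauder theory I would freeze the coefficient and split $K = F + G$ with $F(x,z)= h(z)|z|^{-n-\a}\k(x,x)$ even in $z$, and $G$ the H\"older-in-$z$ residual. For $\a=1$ this decomposition matches the hypotheses of \cite[Theorem~1.1]{IJS2018}, yielding $\|U\|_{C^{1+\g}} \lesssim \|U\|_\infty$, and since the maximum principle gives uniform $L^\infty$ control, the Lipschitz bound follows. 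For $\a>1$, I would further truncate $G = G_\e + H_\e$ on scale $\e$, absorbing $H_\e$ into a bounded source $f$ and a bounded drift $b(x) = -\int G_\e(x,z) z \dz$, rewriting the equation in the Taylor-remainder form \eqref{e:m2}; then the principal kernel $K_\e = F+G_\e$ satisfies (A1)--(A4) of \cite{SS2016} and \cite[Theorem~8.1]{SS2016} supplies the $C^{1+\g}$ bound.

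The main obstacle I anticipate is precisely the verification that the non-even, non-symmetric topological kernel genuinely fits the structural hypotheses of \cite{IJS2018,SS2016} after coefficient-freezing; in particular I need the evenness of $F$, the modulus of continuity of $G$ in $z$ near the origin, and uniform two-sided bounds, all of which require that $\rho_0$ stay bounded away from zero and above, and a careful analysis of the topological denominator $\fint_{\O(x,x+z)}\rho_0$. Once this is in place, the continuation criterion closes the argument and the solution persists in the class $H^{m+1}\times H^{m+\a}$ for all time.
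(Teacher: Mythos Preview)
Your proposal is correct and follows essentially the same approach as the paper: reduction of the ansatz to the scalar diffusion equation \eqref{e:parabolic}, a direct Gr\"onwall argument on $\|\n U\|_\infty$ for $\a<1$, and for $1\le \a<2$ the coefficient-freezing decomposition $K=F+G$ (with further truncation $G=G_\e+H_\e$ when $\a>1$) to invoke \cite[Theorem~1.1]{IJS2018} and \cite[Theorem~8.1]{SS2016}, respectively, after passing to the Taylor-remainder form \eqref{e:m2}. The only minor phrasing issue is that $H_\e$ contributes the bounded source $f$ while the drift $b$ arises from $G_\e$ alone (via evenness of $F$), but your formula $b=-\int G_\e(x,z)\,z\,\dz$ is the correct one.
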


\subsection{Nearly aligned flocks}\label{s:naf}
In this section we reveal another class of global solutions with nearly aligned velocity field. We denote homogeneous H\"older norms by
\[
[f]_k = \|\n^k f\|_\infty.
\]

\begin{theorem}\label{t:naf}
There exists an $R_0 >0$ and $N \in \N$ dependent only on the parameters of the system and $m$ such that if $R>R_0$ and the initial condition satisfies
\[
Y_m(0) + \rmin^{-1}_0 + \rmax_0 \leq R, \qquad \cA_0 \leq \frac{1}{R^N},
\]
then there exists a global unique solution to \eqref{e:main} starting from such initial condition. Moreover, such solution will align exponentially fast,
\begin{equation*}\label{e:AexpRt}
\cA(t) \leq \frac{1}{R^{N-b}}  e^{- \frac{c}{R^a} t} ,
\end{equation*}
where $a,b>0$ depend only on the parameters of the system, and flock to a smooth traveling density profile $\rho_\infty$:
\begin{equation}\label{e:sf}
\rho \to \rho_\infty(x- t \bar{u}).
\end{equation}

\end{theorem}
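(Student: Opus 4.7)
The plan is a bootstrap on \thm{t:criterion}, using the smallness of $\cA$ to both control the density envelope and defeat the polynomial-in-$R$ prefactors that appear in the $Y_m$-estimates of \sect{s:u} and \sect{s:e}. Normalizing $\bar u = 0$ by Galilean invariance, I would fix exponents $a, b, N$ to be determined in terms of $n, m, \a, \t$, and let $T^*$ be the largest time on which the four inequalities
\[
\tfrac{1}{2}\rmin_0 \leq \rmin(t), \quad \rmax(t) \leq 2\rmax_0, \quad Y_m(t) \leq 4R^2, \quad \cA(t) \leq 2R^{-(N-b)}\, e^{-ct/R^a}
\]
hold simultaneously. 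The aim is strict improvement of all four, which prevents $T^* < \infty$.

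Gradient control is the first step. Gagliardo--Nirenberg gives $\|\n u\|_\infty \lesssim \cA^{1-\th}\, \|u\|_{H^{m+1}}^{\th}$ (using that $|u|\leq \cA$ thanks to zero momentum) and a similar interpolation for $\|\n \rho\|_\infty$ between $\rmax$ and $\|\rho\|_{H^{m+\a}}$. Under the bootstrap, the first decays like $R^{-\eta} e^{-c't/R^a}$ with $\eta = (N-b)(1-\th)-\th$ arbitrarily large for $N$ large, while the second stays under a fixed power $R^{\d}$. Integrating the ODE $\ddt \rmax \leq \|\n u\|_\infty \rmax$ (and analogously for $\rmin^{-1}$) against this decay yields $\int_0^\infty \|\n u\|_\infty \dt \lesssim R^{-\eta+a}$, which strictly improves the first two bootstrap estimates.

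For amplitude decay I would combine the energy law \eqref{e:energy} with the kernel lower bound $\phi(x,y) \gtrsim \rmax^{-\t/n} |x-y|^{-n-\a}\one_{|x-y|<r_0/2}$ (which follows from $d(x,y) \lesssim \rmax^{1/n}|x-y|$ on the support of $h$) and the uniform lower bound $\rho \geq \rmin$. This produces the dissipation $\ddt \cE \lesssim -R^{-a_1} \|u\|_{\dH^{\a/2}}^2$, and together with $|u|\leq \cA$ and a standard Poincar\'e-type estimate, yields exponential decay $\cE(t) \leq \cE(0) e^{-ct/R^a}$. Upgrading from $\cE$ to $\cA$ via one more Gagliardo--Nirenberg interpolation against $\|u\|_{H^{m+1}} \leq 2R$ strictly improves the amplitude bound provided $N$ is sufficiently large relative to $b$.

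The main obstacle is the $Y_m$ estimate, for which the paraproduct bounds of \sect{s:e} and the quartic estimate of \lem{l:freqanal} produce residual constants that are polynomial in $R$. The key observation, which I would verify by revisiting each subcritical term in Lemmas \ref{l:subcrit}, \ref{l:subcrit2}, is that every non-dissipative contribution carries at least one factor of $\|\n u\|_\infty$ or $\cA$ after rewriting the transport in the fluctuation $u-\bar u$. This yields a differential inequality
\[
\ddt Y_m \leq -c_0\|u\|_{\dH^{m+1+\a/2}}^2 + P(R)\bigl(\|\n u\|_\infty + \cA^{\b}\bigr) Y_m
\]
with polynomial $P$ and some $\b > 0$. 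Integrating against the exponentially small factor from the first step gives $Y_m(t) \leq Y_m(0)\exp(\e(R))$ with $\e(R) \to 0$ as $R \to \infty$, which closes the bootstrap. Global existence then follows from \thm{t:criterion}. The convergence \eqref{e:sf} is obtained from $\|u\|_\infty \in L^1([0,\infty))$: the Lagrangian flow map $X(t,\cdot)$ converges uniformly to a smooth diffeomorphism $X_\infty$, and pushing $\rho_0$ forward by $X_\infty$ yields the limiting profile $\rho_\infty$, which after undoing the Galilean normalization is transported at constant velocity $\bar u$.
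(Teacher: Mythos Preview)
Your bootstrap strategy matches the paper's: control the density envelope via the smallness of $\|\nabla u\|_\infty$, extract exponential amplitude decay from the energy law, and then observe that every source term in the $Y_m$-estimate carries a small factor so that Gr\"onwall closes. Two technical points are worth flagging.

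First, your upgrade from $\cE$ to $\cA$ by Gagliardo--Nirenberg against $\|u\|_{H^{m+1}}$ works, but it costs a power: you obtain $\cA \lesssim \cE^{\th/2} R^{O(1)}$ with $\th<1$, so the resulting amplitude exponent is roughly $R^{-\th N}$ rather than $R^{-N+O(1)}$, which forces $b$ to grow with $N$. The paper instead runs a pointwise ODE at the extremal points of each velocity component, using $\cE^{1/2}$ as a forcing term; this recovers $\cA(t)\leq R^{-(N-b)}e^{-ct/R^a}$ with $b$ genuinely independent of $N$. Your route still closes the bootstrap, but with less clean constants.

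Second, your claim that ``every non-dissipative contribution carries at least one factor of $\|\nabla u\|_\infty$ or $\cA$'' is the heart of the matter, and checking Lemmas~\ref{l:subcrit}, \ref{l:subcrit2} alone is not enough. The critical $\cT_3,\cT_4$ terms go through \lem{l:freqanal}, whose output is $\|f\|_2(\|g\|_{H^{m+\a}}+\|g'\|_{H^{m+\a}}+\|g''\|_{H^{m+\a}})$ with constants depending on the \emph{gradients} of $g,g',g''$. You need to trace through each of the eight paraproduct blocks and verify that whenever the bound lands on $\|\rho\|_{H^{m+\a}}$ (rather than $\|u\|_{H^{m+\a}}$, which can be interpolated against $\cA$ and dissipation), the absorbed constant contains a $[u]_1$ or $[u]_2$ factor. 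This is indeed the case, but it is a separate check from the subcritical lemmas.
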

\begin{proof}

We assume without loss of generality that the momentum vanishes, $P = 0$. 

Let us observe that since $\|u(t)\|_{H^{m+1}}$ on a given time interval controls $[\rho]_1$ via the continuity equation,  the local solution to \eqref{e:main} according to our criterion can be extended up to the critical time $t = t^*$, at which 
\[
\|u(t^*)\|_{H^{m+1}} = 2R,
\]
for the first time, i.e.
\[
\|u(t)\|_{H^{m+1}} < 2R, \qquad t<t^*.
\]
Our goal will be to show that such time $t^*$ never happens, and thus the solution is global.

\begin{lemma}\label{}
On the same time interval $[0,t^*]$ we have
\begin{equation}\label{e:rR}
\frac{1}{2R} \leq \rmin \leq \rmax \leq 2R.
\end{equation}
\end{lemma}
\begin{proof}
Let $t^{**}$ be the first time when one of these inequalities fails. We would like to show that $t^{**} > t^*$. If not, let us make a preliminary alignment estimate on the shorter time interval $[0,t^{**}]$.

Recall the energy law \eqref{e:energy}. Note that
\[
\phi(x,y) \geq \frac{\l}{M^{\t/n}} \one_{|x-y|<r_0}.
\]
So, we have
\[
\ddt \cE \leq - \frac{c}{R^{2+\frac{\t}{n}}} \int_{|x-y|<r_0} | u(x) - u(y)|^2 \dy \dx.
\]
Applying \cite[Lemma 2.1]{LSlimiting}, we further continue
\[
\ddt \cE \leq -\frac{c}{R^{2+\frac{\t}{n}}} \int_{\T^n} |u(x) - \bar{u}|^2 \dx,
\]
where $\bar{u}$ is the usual average of $u$. Note that it may not be $0$ despite vanishing momentum. Let us reinsert the density noting that $\rho/R \leq 2$,
\[
\ddt \cE \leq -\frac{c}{R^{3+\frac{\t}{n}}} \int_{\T^n} \rho(x) |u(x) - \bar{u}|^2 \dx.
\]
Expanding the square and using vanishing of the momentum we obtain
\[
\ddt \cE \leq - \frac{c}{R^{3+\frac{\t}{n}}} \int_{\T^n} \rho(x) |u(x)|^2 \dx = - \frac{c}{R^{3+\frac{\t}{n}}}  \cE.
\]
Thus, on the time interval $[0,t^{**}]$ we have 
\begin{equation*}\label{ }
\cE(t) \leq \cE_0 e^{- \frac{c}{R^a} t}, \qquad a = 3+\frac{\t}{n}.
\end{equation*}
Let us note that initial energy is bounded by (again using the vanishing momentum)
\[
\cE_0 \leq  \int_{\T^{2n}} \rho_0(x)\rho_0(y) | u_0(x) - u_0(y)|^2 \dy \dx \leq \cA_0 M^2 \leq C R^{2-N}.
\]
So,
\[
\cE(t) \leq C R^{2-N}e^{- \frac{c}{R^a} t}.
\]

Now we estimate the decay of the amplitude $\cA$ itself. Let us pick one coordinate of $u$ and evaluate at a point of its maximum $x_+$:
\begin{equation*}\label{}
\begin{split}
\ddt u(x_+) & \leq   \frac{c}{R^{\t/n}} \int_{|z| < r_0} \rho(x_+ +z) [u(x_++z) - u(x_+)] \dz \leq  \frac{c}{R^{\t/n}} M^{1/2} \cE^{1/2} -  \frac{c}{R^{\t/n}} u(x_+) \\
&\leq C R^{5/2 - \t/n - N} e^{- \frac{c}{R^a} t} -  \frac{c}{R^{\t/n}} u(x_+).
\end{split}
\end{equation*}
Similarly,
\[
\ddt u(x_-) \geq  - C R^{5/2 - \t/n - N} e^{- \frac{c}{R^a} t} +  \frac{c}{R^{\t/n}} u(x_-).
\]
Subtracting the two,
\[
\ddt \cA \leq C R^{5/2 - \t/n - N} e^{- \frac{c}{R^a} t} -  \frac{c}{R^{\t/n}} \cA.
\]
By the Gr\"onwall inquality,
\[
\cA(t) \leq \cA_0 e^{- \frac{c}{R^{\t/n} }t} + C R^{5/2 - \t/n - N} t e^{- \frac{c}{R^a} t} \leq \frac{1}{R^N}  e^{- \frac{c}{R^a} t} +C R^{5/2 - \t/n - N+a} e^{- \frac{c}{R^a} t}.
\]
Thus,
\begin{equation}\label{e:AexpR}
\cA(t) \leq \frac{1}{R^{N-b}}  e^{- \frac{c}{R^a} t} ,
\end{equation}
where $b$ depends only on $n, \t$. By interpolation,
\[
\| \n u\|_\infty \leq \cA^{\th} \| u\|_{H^{m+1}}^{1-\th} \leq \frac{1}{R^{\th(N-b) + \th - 1}}  e^{- \frac{\th c}{R^a} t} .
\]
Integrating the continuity equation along characteristics,  we obtain
\[
\rmax \leq \| \rho_0 \|_\infty \exp\left\{ \int_0^t \| \n u\|_\infty \ds \right\} \leq R \exp \left\{ \frac{1}{\th c R^{\th(N-b) + \th - 1 - a }} \right\},
\]
and similarly,
\[
\rmin \geq \frac{1}{R} \exp \left\{ - \frac{1}{\th c R^{\th(N-b) + \th - 1 - a }} \right\}.
\]
Clearly, if $R$ and $N$ are large enough the exponential is $<2$. This leads to a contradiction with the definition of $t^{**}$.
\end{proof}

From this point on we will denote by $\cN(t)$ any ``negligent" quantity which has a bound of the form
\[
\cN(t) \leq \frac{C}{R^{\th N}} e^{ -  \frac{c}{R^a} t },
\]
where $C,c>0$, and $0<\th<1$ and $a>0$ depend only on $\t$, $n$, $\a$, $m$, the parameters of the system.
We observe the identities
\begin{equation*}\label{}
R^b \cN \sim \cN, \quad  \cN^\l \sim \cN, \quad \mbox{etc}.
\end{equation*}

As a consequence of the proof of the lemma we have shown that as long as \eqref{e:rR} holds, the estimate on the amplitude \eqref{e:AexpR} holds. As a result, by interpolation, we have similar exponential bounds in H\"older classes,
\begin{equation*}\label{}
[u]_1, [u]_2, [u]_3 \leq \cN,
\end{equation*}
and as a consequence, from the continuity equation, 
\begin{equation*}\label{}
[\rho]_1, [\rho]_2 \leq 2R, 
\end{equation*}
for all $t<t^*$, provided $N$ is large enough. As a further consequence, we obtain
\[
\|e\|_\infty \leq [u]_1+ \| \cL_{\phi} \rho \|_\infty.
\]
We appeal to \cite[Lemma B.1]{STtopo} (with $r=1$ and $\g=0$) to conclude 
\[
\| \cL_\phi \rho \|_\infty \leq  R^b ([\rho]_2 + \|\rho\|_\infty + [\rho]_1^2) \leq R^b,
\]
where $b$ depends only on the parameters of the system. Thus,
\begin{equation}\label{e:eRinfty}
\|e(t)\|_\infty \leq C R^b, \qquad t<t^*.
\end{equation}
Now, if we look back at the $e$-equation, we can see that all the transport terms contain a power of $[u]_1$, and with \eqref{e:eRinfty}  can be estimated by $\leq \cN Y_m$. At the same time the topological terms $\cT_1,\cT_2$ will include a power of $[u]_1$ as a result of application of \lem{l:subcrit},
\[
\|\cT_{1,2}\| \leq \cN \| \rho \|_{H^{m+\a}}^{\th_1} \| u\|_{H^{m+\a}}^{\th_2} \leq \cN Y_m + \cN+ \frac{\e}{R^{1+\t/n}} \|u\|^2_{H^{m+1+\frac{\a}{2}}}. 
\]
Similarly, all paraproduct estimates of \lem{l:freqanal} for  $\cT_3,\cT_4$ will include either $ \| u\|_{H^{m+\a}}$ or $\| \rho \|_{H^{m+\a}}$ coupled with a power of $[u]_1$. Again, in the latter case this results in a factor of $\cN$, while in the former case, by interpolation 
\[
 \| u\|_{H^{m+\a}} \leq \cN \|u\|_{H^{m+1+\frac{\a}{2}}}^\th.
\]
In summary we have a factor of $\cN$ to appear in the main term of the $e$-equation:
\begin{equation*}\label{e:eR}
\ddt \|e\|_{H^m}^2 \leq \cN Y_m + \cN+ \frac{\e}{R^{1+\t/n}} \|u\|^2_{H^{m+1+\frac{\a}{2}}}. 
\end{equation*}

Examining the $u$-equation in a similar manner we conclude
\begin{equation}\label{e:uRY}
\ddt \|u\|_{H^{m+1}}^2 \leq \cN Y_m + \cN - \frac{c_0}{R^{1+\t/n}} \|u\|^2_{H^{m+1+\frac{\a}{2}}}. 
\end{equation}
Adding the two together, we obtain
\[
\ddt(Y_m+1) \leq \cN (Y_m +1) - \frac{c_0}{2R^{1+\t/n}} \|u\|^2_{H^{m+1+\frac{\a}{2}}}. 
\]
Ignoring the dissipation term for a moment we conclude by integration that 
\[
(Y_m+1)(t^*) \leq (R+1) e^{R^{-\th N}} \leq 2R,
\]
if $N$ and $R$ are large enough.  Thus,
\[
Y_m(t^*) \leq 3R.
\]
Plugging this back into \eqref{e:uRY} we conclude that at the critical time $t^*$, 
\[
\ddt \|u\|_{H^{m+1}}^2 \leq 3 \cN  R + \cN - c_0 R^{1 - \tau/n} \leq 4 R^{1-\th N} - c_0 R^{1 - \tau/n} < 0,
\]
if again $R$ and $N$ are chosen large enough. This is a contradiction with the definition of $t^*$.

The flock convergence \eqref{e:sf} follows immediately from the continuity equation and exponential decay of all norms of $u$ up to $H^{m+1}$.
\end{proof}

\bibliographystyle{plain}
\bibliography{CCtopo,collective-appl,collective-pure}

\end{document}